\newtheorem{thm}{Theorem}[section]
\newtheorem{lem}[thm]{Lemma}
\newtheorem{cor}[thm]{Corollary}
\newtheorem*{thma}{Theorem A}
\newtheorem*{thmb}{Theorem B}
\newtheorem*{thmc}{Theorem C}
\theoremstyle{definition}
\newtheorem{defn}[thm]{Definition}
\newtheorem{ex}{Example}
\newcommand{\Aut}{\mathrm{Aut}}
\newcommand{\Sym}{\mathrm{Sym}}
\newcommand{\Out}{\mathrm{Out}}
\newcommand{\Comp}{\mathcal{K}}
\newcommand{\GL}{\mathrm{GL}}
\newcommand{\SL}{\mathrm{SL}}
\newcommand{\ds}{\sigma}
\newcommand{\dm}{\delta}
\newcommand{\pmax}{\lambda}
\newcommand{\cst}{c}
\newcommand{\bF}{\mathbb{F}}
\newcommand{\bN}{\mathbb{N}}
\newcommand{\bP}{\mathbb{P}}
\newcommand{\bZ}{\mathbb{Z}}
\newcommand{\mcK}{\mathcal{K}}
\newcommand{\mcQ}{\mathcal{Q}}
\begin{document}

\title{\Large On finite groups whose Sylow subgroups have a bounded number of generators}

\author{Colin D. Reid\\
Lehrstuhl B f\"{u}r Mathematik\\
RWTH Aachen\\
Templergraben 64, D-52062 Aachen, Germany\\
colin@reidit.net}

\maketitle

\begin{abstract}Let $G$ be a finite non-nilpotent group such that every Sylow subgroup of $G$ is generated by at most $\dm$ elements, and such that $p$ is the largest prime dividing $|G|$.  We show that $G$ has a non-nilpotent image $G/N$, such that $N$ is characteristic and of index bounded by a function of $\dm$ and $p$.  This result will be used to prove that the index of the Frattini subgroup of $G$ is bounded in terms of $\dm$ and $p$.  Upper bounds will be given explicitly for soluble groups.\end{abstract}

\emph{Keywords}: Finite group theory; Sylow theory

\section{Introduction}

\begin{defn}Write $\bP$ for the set of all primes, and $p'$ for the set of all primes except for $p$.

Let $G$ be a finite group.  The \emph{Frattini subgroup} $\Phi(G)$ of $G$ is the intersection of all maximal subgroups of $G$.  The \emph{Fitting subgroup} $F(G)$ of $G$ is the largest nilpotent normal subgroup of $G$.  Note that $\Phi(G) \leq F(G)$ for any finite group; see for instance Theorem 5.2.15 of \cite{Rob}.

Write $d(G)$ for the minimum size of a generating set of $G$.  Define $d_p(G):=d(S_p)$, where $S_p$ is a Sylow $p$-subgroup of $G$; equivalently, $d_p(G) = \log_p|S_p/\Phi(S_p)|$.  Define $\ds(G) := \sum_{p \in \bP} d_p(G)$, and $\dm(G) := \max_{p \in \bP} d_p(G)$.  Note that $\ds(G)$ is bounded by a function of $\dm(G)$ and $\pmax(G)$.

Define $\pmax(G)$ to be the largest prime dividing $|G|$ (set $\pmax(G)=2$ if $G$ is trivial).\end{defn}

Much of the theory of finite groups is built on the study of $p$-subgroups, in particular Sylow subgroups, and the interactions between different primes.  In some situations, a bound on the number of generators of a Sylow subgroup is significant.  For instance, there is a large body of work concerning the ordinary and $p$-modular representation theory of finite groups $G$ for which $d_p(G)=1$, that is, $G$ has a cyclic Sylow $p$-subgroup.  This is of course a special case, as the internal structure of $d$-generator $p$-groups is very much more complex even for $d=2$.  Nevertheless, an arbitrary bound on the number of generators of Sylow subgroups can have surprising consequences.  For instance, it was shown by Guralnick (\cite{Gur}) and Lucchini (\cite{Luc}) that $d(G) \leq \dm(G) + 1$, for any finite group $G$.

This paper concerns results in a similar vein, with the hypothesis that bounds are given for $\ds(G)$ and $\dm(G)$, together with the largest prime $\pmax(G)$.  If $G$ is nilpotent, then there is no bound on the nilpotency class even for $\dm(G) = 2$.  However, there is more control over the manner in which $G$ can fail to be nilpotent.

\begin{thma}Let $G$ be a non-nilpotent finite group.  Let $N$ be a subgroup that is maximal subject to the conditions that $N$ is characteristic and $G/N$ is not nilpotent.  Then exactly one of the following holds, where $\dm = \dm(G)$ and $\pmax = \pmax(G)$.
\vspace{-10pt}
\begin{enumerate}[(i)]  \itemsep0pt
\item The quotient $G/N$ is of the form $S \rtimes H$, where $S$ is an elementary abelian $p$-group for some prime $p$ and $H$ is a nilpotent $p'$-group that acts faithfully on $S$ by conjugation.  The order of $G/N$ is at most $p^{\cst \dm}/2$, where $\cst = \log 288/\log9 < 8/3$.
\item There is a characteristic subgroup of $G/N$ that is the direct product of at most $\dm^2$ copies of a non-abelian finite simple group.  The order of $G/N$ is at most $c^{\dm^2\pmax^2}$, where $c$ is an absolute constant.\end{enumerate}\end{thma}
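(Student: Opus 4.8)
The plan is to pass to $K := G/N$ and exploit the maximality of $N$: if $M/N$ is a nontrivial characteristic subgroup of $K$, then $M$ is characteristic in $G$ and properly contains $N$, so by maximality $K/(M/N) \cong G/M$ is nilpotent. Thus $K$ is non-nilpotent while every proper characteristic quotient of $K$ is nilpotent; I would prove the whole theorem as a structural dichotomy for such \emph{just-non-nilpotent} groups. Two consequences are immediate. First, $\Phi(K)=1$, since $K/\Phi(K)$ nilpotent forces $K$ nilpotent. Second, $K$ has a unique minimal characteristic subgroup $W$ (a \emph{monolith}): if $A,B$ are nontrivial characteristic subgroups with $A\cap B=1$, then $K$ embeds into the nilpotent group $K/A\times K/B$ and is itself nilpotent, a contradiction, so any two nontrivial characteristic subgroups meet nontrivially.

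Next I would pin down $F(K)$. By Gasch\"utz's theorem $\Phi(F(K))\le\Phi(K)=1$, so the nilpotent group $F(K)$ is abelian and equals $\prod_q O_q(K)$ with each $O_q(K)$ elementary abelian. The $O_q(K)$ are characteristic with pairwise trivial intersection, so the monolith forces at most one to be nontrivial; hence $F(K)=O_p(K)$ for a single prime $p$ whenever $F(K)\ne 1$. The decisive split is whether the layer $E(K)$ is trivial. If $E(K)=1$ then $F^{*}(K)=F(K)$ is self-centralizing, so $F(K)\ne1$ and $S:=F(K)=O_p(K)=C_K(S)$ is elementary abelian, whence $K/S\hookrightarrow\Aut(S)$ is nilpotent and faithful. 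Taking preimages of $O_p(K/S)$ shows any normal $p$-subgroup above $S$ lies in $O_p(K)=S$, so $K/S$ is a $p'$-group and $K=S\rtimes H$ as in (i). Here the Sylow $p$-subgroup of $K$ is $S$ itself, so $\dim_{\bF_p}S=d_p(K)\le d_p(G)\le\dm$. The order bound then reduces to the linear-algebra fact that a faithful nilpotent $p'$-subgroup $H\le\GL(d,p)$ (completely reducible, by coprimeness) satisfies $|H|\le p^{(\cst-1)d}/2$, with equality only for the Sylow $2$-subgroup of $\GL(2,3)$; this gives $|K|=p^{d}|H|\le p^{\cst\dm}/2$. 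Proving this sharp bound, via Clifford theory to reduce to primitive actions and then an analysis of the small extremal configurations, is the main work of case (i).

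If $E(K)\ne1$ then $Z(E(K))\le\Phi(E(K))\le\Phi(K)=1$, so every component is simple and $E(K)$ is a direct product of nonabelian simple groups. The products of factors of a fixed isomorphism type are characteristic, as are the products over $K$-orbits on the factors; in each case two nontrivial such subgroups would intersect trivially, so the monolith forces a single isomorphism type and a transitive action, whence $E(K)\cong T^{r}$ for one nonabelian simple $T$. Since $F(K)\cap E(K)\le Z(E(K))=1$, the monolith also forces $F(K)=1$, so $F^{*}(K)=E(K)=\Soc(K)$ is self-centralizing, $C_K(E(K))=1$, and $K\hookrightarrow\Aut(T^{r})=\Aut(T)\wr\Sym(r)$, giving the characteristic subgroup of (ii). It remains to bound $r$ and $|K|$: for a prime $p\mid|T|$ one has $d_p(E(K))=r\,d_p(T)\ge r$, and combined with classification-based bounds on $|\Aut(T)|$ and on $r!$ in terms of the largest prime $\pmax$ this should yield $r\le\dm^{2}$ and $|K|\le \cst^{\dm^{2}\pmax^{2}}$. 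I expect the genuine obstacle in case (ii) to be exactly this control of $r$ (and of $|T|$ by its largest prime divisor together with Sylow generation), since passing a generator bound to the Sylow subgroups of the normal subgroup $E(K)$ is not automatic and must use the permutation structure of the factors. Finally the two cases are mutually exclusive, as (i) forces $K$ soluble while (ii) forces a nonabelian composition factor, so exactly one of them holds.
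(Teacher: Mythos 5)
Your case (i) is essentially complete, and is in fact a slightly different (and self-contained) route: where the paper proves $\Phi(S)=1$ by applying Tate's theorem via Corollary \ref{tatecor}(ii) to $G/\Phi(S)$, you obtain $S=F(K)$ elementary abelian from Gasch\"utz's $\Phi(F(K))\le\Phi(K)=1$ together with the monolith; both routes then reduce to the same linear-algebra bound, which is exactly Wolf's theorem (Theorem \ref{nilplin}) and need not be reproved. The structural half of case (ii) is also sound, up to a small slip: the products of components over $K$-orbits are merely normal, not characteristic, so for the monolith argument you must use $\Aut(K)$-orbits (a single isomorphism type already follows from the type-products being characteristic, and $\Aut(K)$-transitivity is what you will need below anyway).

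The genuine gap is exactly where you flagged it: you never prove $r\le\dm^2$, and the mechanism you gesture at cannot work, because $d_p(E(K))=r\,d_p(T)$ is not bounded by $d_p(K)$ --- generator numbers do not pass from a finite $p$-group to its subgroups, as you yourself note. The missing idea is the paper's Lemma \ref{comporb}, which is a second application of Tate's theorem: since $|S_p:\Phi(S_p)|=p^{d_p}$, one can choose a set $\mcQ$ of at most $d_p$ components with $E(K)\cap S_p\le(\langle\mcQ\rangle\cap S_p)\Phi(S_p)$; if the normal closure of $\langle\mcQ\rangle$ were proper in $E(K)$, Corollary \ref{tatecor} would make $O^p$ of the quotient a $p'$-group, which is impossible as every component has order divisible by $p$. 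Hence every component is $S_p$-conjugate into $\mcQ$, so $S_p$ has at most $d_p\le\dm$ orbits on the $r$ components. Moreover $S_pE(K)$ is characteristic in $K$ (its image is the Sylow $p$-subgroup of the nilpotent group $K/E(K)$), $E(K)$ normalises each component, and $\Aut(K)$ is transitive on the components, so all $S_p$-orbits have one common $p$-power size. Running this for two distinct primes $p,q$ dividing $|T|$ gives $r=xp^m=x'q^{m'}$ with $x,x'\le\dm$, whence $r\le\gcd(xp^m,x')\gcd(x,q^{m'})\le xx'\le\dm^2$; this two-prime gcd trick is the heart of case (ii) and is absent from your sketch. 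A secondary quantitative defect: your $r!$ factor from $\Aut(T)\wr\Sym(r)$ does not yield the stated bound $c^{\dm^2\pmax^2}$, since for fixed $\pmax$ and large $\dm$ the quantity $(\dm^2)!$ exceeds any $c^{\dm^2\pmax^2}$. Instead, note that the image of $K$ in $\Sym(r)$ is a quotient of the nilpotent group $K/E(K)$ (maximality of $N$ again), so by Vdovin's theorem (Theorem \ref{nilper}) its order is at most $2^r\le 2^{\dm^2}$, giving $|K|\le\bigl(2b^{\pmax^2}\bigr)^{\dm^2}$ as in the paper.
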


When combined with Tate's $p$-complement theorem (see Theorem \ref{tate}), Theorem A can be used to obtain a bound on $|G:\Phi(G)|$.

\begin{thmb}Let $G$ be a finite group; set $\dm = \dm(G)$ and $\pmax = \pmax(G)$.
\vspace{-10pt}
\begin{enumerate}[(i)]  \itemsep0pt
\item The index $|G:\Phi(G)|$ is bounded by a function of $\dm$ and $\pmax$.
\item Set $\ds = \ds(G)$.  If $G$ is non-trivial and soluble, then $|G:\Phi(G)| \leq g_{\ds}\lambda^{\ds g_{\ds}}$, where $g_{\ds}$ is obtained as follows:
\[ c= \log 288/\log 9; \quad g_0 = 1 ; \quad g_{i+1} = \pmax^{\cst \dm g_i}, i \geq 0.\]\end{enumerate}\end{thmb}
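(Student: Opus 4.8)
The plan is to pass to $\bar G = G/\Phi(G)$ and to bound $|\bar G| = |G:\Phi(G)|$ by repeatedly stripping off the top non-nilpotent quotient with Theorem A. First I would record the reductions. Since the Sylow subgroups of a quotient are the images of Sylow subgroups, $\dm(\bar G)\le\dm$, $\ds(\bar G)\le\ds$ and $\pmax(\bar G)\le\pmax$; as the asserted bound is monotone in these quantities, it suffices to bound $|\bar G|$, and $\Phi(\bar G)=1$. Two facts will drive everything. By a theorem of Gasch\"utz (see \cite{Rob}), $\Phi(M)\le\Phi(\bar G)=1$ for every normal subgroup $M$, so every characteristic subgroup of $\bar G$ again has trivial Frattini subgroup; and if $\bar G$ is nilpotent then $\bar G=\prod_p S_p$ with each $S_p$ elementary abelian of rank $d_p\le\dm$, whence $|\bar G|\le\pmax^{\ds}$, which lies below $g_{\ds}\pmax^{\ds g_{\ds}}$ because $g_{\ds}\ge1$. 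This settles the base case.

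For the non-nilpotent soluble case I would iterate. Put $\bar G_0=\bar G$; given $\bar G_i$ non-nilpotent, Theorem A produces a characteristic $\bar G_{i+1}\trianglelefteq\bar G_i$ with $\bar G_i/\bar G_{i+1}$ non-nilpotent and maximal such. Solubility rules out alternative (ii), so alternative (i) applies and $|\bar G_i/\bar G_{i+1}|\le\pmax^{\cst\dm(\bar G_i)}/2$. By Gasch\"utz each $\bar G_i$ still has trivial Frattini subgroup, so the procedure is legitimate and halts at a nilpotent $\bar G_m$, giving $|\bar G|=|\bar G_m|\prod_{i<m}|\bar G_i/\bar G_{i+1}|$ with $|\bar G_m|\le\pmax^{\ds(\bar G_m)}$. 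Everything then reduces to two points: controlling the generator number $\dm(\bar G_i)$ that feeds the exponent at each stage, and bounding the number $m$ of iterations.

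The exponent is controlled by a Reidemeister--Schreier estimate: if $\bar G_i$ has index $I_i=|\bar G:\bar G_i|$, then each Sylow subgroup of $\bar G_i$ sits with index at most $I_i$ inside a $\dm$-generated Sylow subgroup of $\bar G$, hence needs at most $1+I_i(\dm-1)\le\dm I_i$ generators, so $\dm(\bar G_i)\le\dm I_i$. Substituting into $|\bar G_i/\bar G_{i+1}|\le\pmax^{\cst\dm(\bar G_i)}$ and writing $g_i$ for the running bound on the accumulated index reproduces precisely the recursion $g_{i+1}=\pmax^{\cst\dm g_i}$, $g_0=1$. I expect the genuine obstacle to be twofold. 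The conceptual heart is the bound $m\le\ds$ on the number of iterations: this is exactly where Tate's theorem (Theorem \ref{tate}) enters, since it prevents a non-nilpotent $p$-extension from being concealed, so that each application of Theorem A is witnessed by a fresh generator of some Sylow subgroup and the iteration cannot outrun the budget $\ds$. The delicate technical point is the bookkeeping that keeps the accumulated index bounded by the tower $g_i$ rather than by a product of such terms; here the super-exponential growth $g_{i+1}=\pmax^{\cst\dm g_i}$ is essential, as it lets a product $\prod_{i\le\ds}g_i$ be reabsorbed into $g_{\ds}$ while the final nilpotent factor $|\bar G_m|$ contributes only the outer $\pmax^{\ds g_{\ds}}$, yielding $|\bar G|\le g_{\ds}\pmax^{\ds g_{\ds}}$.

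For part (i) I would run the same iteration without assuming solubility, now permitting alternative (ii) of Theorem A, in which a chunk has order at most $\cst^{\dm^2\pmax^2}$ for an absolute constant; the companion analysis of components underlying that alternative, together with the same Tate-theoretic control on the number of steps, bounds $|G:\Phi(G)|$ by a (now unspecified) function of $\dm$ and $\pmax$. Alternatively, one may argue directly via the generalized Fitting subgroup of $\bar G$: since $C_{\bar G}(F^{*}(\bar G))\le F^{*}(\bar G)$, the group $\bar G$ embeds into $\Aut(F^{*}(\bar G))$ modulo inner automorphisms, and it remains to bound $|F^{*}(\bar G)|$, whose Fitting part is a product of elementary abelian groups of order at most $\pmax^{\ds}$ and whose layer is a product of boundedly many quasisimple groups of bounded order by the component analysis of Theorem A. Either route establishes (i).
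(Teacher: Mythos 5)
Your proposal is essentially the paper's own proof: the same characteristic chain $G_0 = G > G_1 > \dots > G_t$ produced by Theorem A, the same Schreier-index bound $\dm(G_i) \leq \dm g_i - g_i + 1$, the same recursion $g_{i+1} = \pmax^{\cst\dm g_i}$ absorbing the accumulated index, and the chain-length bound $t \leq \ds$ via Tate, which you correctly attribute to ``a fresh generator of some Sylow subgroup'' per step and which the paper makes precise by showing that $r_i = \sum_{p}\log_p|(G_i \cap S_p)\Phi(S_p)/\Phi(S_p)|$ strictly decreases, using Corollary \ref{tatecor}. The remaining differences are cosmetic --- the paper works in $G$ itself, using $\Phi(G_t) \leq \Phi(G)$ and $\ds(G_t) \leq |G:G_t|(\ds-1)+1$ rather than passing to $G/\Phi(G)$ via Gasch\"utz, and your parenthetical alternative route to (i) through $F^{*}$ is shakier than you suggest (the rank of a normal elementary abelian $p$-subgroup is not obviously bounded by $d_p$) but is not needed, since your main iteration already yields (i) exactly as in the paper.
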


The following generalisation of Theorem B part (i) to profinite groups strengthens a result of Mel'nikov (\cite{Mel}).

\begin{cor}\label{melcor}Let $G$ be a profinite group, such that every Sylow pro-$p$ subgroup of $G$ is topologically generated by at most $d$ elements, and such that no prime dividing the order of a finite continuous image of $G$ exceeds $n$.  Then $G$ has a pro-(finite nilpotent) open normal subgroup of index bounded by a function of $d$ and $n$.\end{cor}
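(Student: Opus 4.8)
The plan is to reduce the statement to Theorem B(i) applied to the finite continuous quotients of $G$, and then reassemble the resulting bounds by an inverse-limit argument. Let $f$ be the bounding function furnished by Theorem B(i), which we may assume to be non-decreasing in each argument. For each open normal subgroup $N$ of $G$, the finite group $G/N$ is a continuous image of $G$, so every prime dividing $|G/N|$ is at most $n$, and every Sylow subgroup of $G/N$, being the image of a topologically $d$-generated Sylow pro-$p$ subgroup of $G$, is generated by at most $d$ elements. Hence $\dm(G/N) \leq d$ and $\pmax(G/N) \leq n$, and Theorem B(i) yields $|G/N : \Phi(G/N)| \leq f(d,n)$.

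Next I would pass to preimages in $G$. Let $\Phi_N$ denote the preimage of $\Phi(G/N)$; this is an open normal subgroup of $G$ containing $N$, with $|G : \Phi_N| = |G/N : \Phi(G/N)| \leq f(d,n)$. The family $\{\Phi_N\}$, indexed by the open normal subgroups $N$ under reverse inclusion, is \emph{downward directed}. Indeed, since the image of the Frattini subgroup under a surjection of finite groups lies in the Frattini subgroup of the image, the quotient map $G/N_1 \twoheadrightarrow G/N_2$ for $N_1 \leq N_2$ gives $\Phi_{N_1} \leq \Phi_{N_2}$; and for arbitrary $N_1, N_2$ the subgroup $\Phi_{N_1 \cap N_2}$ lies in both $\Phi_{N_1}$ and $\Phi_{N_2}$.

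The key step is to observe that the uniform bound on $|G : \Phi_N|$ forces this directed family to stabilise. The indices $|G:\Phi_N|$ are positive integers bounded by $f(d,n)$, so their supremum $m_0$ is attained at some $N_0$. For an arbitrary $N$, monotonicity applied to $N \cap N_0 \leq N_0$ gives $\Phi_{N\cap N_0} \leq \Phi_{N_0}$, so $|G:\Phi_{N\cap N_0}| \geq m_0$; maximality of $m_0$ forces equality of the indices and hence $\Phi_{N\cap N_0} = \Phi_{N_0}$. Combining this with $\Phi_{N\cap N_0} \leq \Phi_N$ yields $\Phi_{N_0} \leq \Phi_N$ for every $N$. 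Therefore $R := \bigcap_N \Phi_N = \Phi_{N_0}$ is an open normal subgroup of $G$ of index $m_0 \leq f(d,n)$.

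Finally I would check that $R$ is pro-(finite nilpotent). For every open normal $M \leq N_0$ the inclusions $\Phi_M \leq \Phi_{N_0}$ (monotonicity) and $\Phi_{N_0} \leq \Phi_M$ (previous step) give $\Phi_M = R$, so $R/M = \Phi(G/M)$, which is nilpotent since the Frattini subgroup of a finite group is nilpotent, being contained in the Fitting subgroup. Such $M$ are cofinal among the open normal subgroups of $G$ and hence form a neighbourhood base of the identity in $R$, so every finite continuous quotient of $R$ is a quotient of some nilpotent $R/M$ and is itself nilpotent; thus $R$ is an inverse limit of finite nilpotent groups, as required. I expect the main obstacle to lie precisely in this stabilisation bookkeeping: the intersection of infinitely many open subgroups of bounded index need not be open a priori, and the argument succeeds only because the monotonicity of $N \mapsto \Phi_N$ converts the boundedness of the indices into genuine stabilisation, identifying $R$ with the single subgroup $\Phi_{N_0}$.
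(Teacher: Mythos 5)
Your proposal is correct and follows essentially the same route as the paper: apply Theorem B to each finite continuous quotient $G/N$, pull back $\Phi(G/N)$ to an open normal subgroup $\Phi_N$ of $G$, and intersect these preimages to obtain the desired open pro-(finite nilpotent) normal subgroup of index at most $f(d,n)$. The only difference is cosmetic bookkeeping: where the paper deduces that the intersection is open by viewing $G/\bigcap_N \Phi_N$ as an inverse limit of groups of order at most $f(d,n)$, you reach the same conclusion via an explicit stabilisation argument identifying the intersection with a single member $\Phi_{N_0}$ of the directed family.
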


Finally, it can be concluded from Theorem B that the number of non-abelian composition factors of $G$ is bounded by a function of $\dm(G)$ and $\pmax(G)$.  In fact, a bound on the number of non-abelian composition factors can be obtained with weaker conditions on $G$, using a different method of proof.

\begin{thmc}Let $G$ be a finite group and let $p$ and $q$ be distinct primes.  Then the number of composition factors of $G$ of order divisible by $pq$ is bounded by a function of $(p,q,d_p(G),d_q(G))$.  The total number of non-abelian composition factors of $G$ is bounded by a function of $(d_2(G),d_3(G),d_5(G))$.\end{thmc}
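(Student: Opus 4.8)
The plan is to deduce the second assertion from the first, so I begin there. Every composition factor of order divisible by $pq$ with $p\neq q$ is non-abelian, since an abelian simple group is cyclic of prime order and so has order divisible by a single prime. For the reduction I use a consequence of the classification of finite simple groups: by Feit--Thompson every non-abelian finite simple group has even order, and the only non-abelian finite simple groups of order prime to $3$ are the Suzuki groups $Sz(2^{2n+1})$, each of which has order divisible by $5$ (indeed $5\mid q^2+1$ for $q=2^{2n+1}$). Hence every non-abelian finite simple group has order divisible by $6$ or by $10$. Granting the first assertion, the total number of non-abelian composition factors of $G$ is therefore at most the number with order divisible by $6$ plus the number with order divisible by $10$, and each of these is bounded by a function of $(d_2(G),d_3(G),d_5(G))$. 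This establishes the second assertion; it is precisely this step that forces the primes $2,3,5$ to appear.

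For the first assertion, fix distinct primes $p,q$. I would argue through the chief factors of $G$: in a chief series the relevant composition factors are exactly those arising from the non-abelian chief factors $T^{k}$ with $pq\mid|T|$ and $T$ simple, and the quantity to bound is the sum of the multiplicities $k$ over all such factors. The basic building block is therefore the following estimate, which I would isolate as a lemma: if $X$ is a finite group with a normal subgroup isomorphic to $T^{k}$, where $T$ is non-abelian simple with $pq\mid|T|$ and $X$ permutes the $k$ factors transitively, then $k$ is bounded by a function of $p,q,d_p(X),d_q(X)$.

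To prove this lemma I would study the action of a Sylow $\ell$-subgroup $\hat S_\ell$ of $X$ on the $k$ factors, for $\ell\in\{p,q\}$, writing $a_\ell$ for the number of its orbits. First, $a_\ell\le d_\ell(X)$: one constructs a surjection of $\hat S_\ell$ onto a direct product of $a_\ell$ nontrivial $\ell$-groups, one per orbit, using the permutation quotient on each orbit of size greater than $1$ and, on a fixed factor, the nontrivial inner $\ell$-elements supplied by $\hat S_\ell\cap T\neq1$ (here $\ell\mid|T|$ is essential). Secondly, I exploit the coprimality of $p$ and $q$: if for one prime, say $p$, some Sylow orbit is large but cheap to generate---as happens for a regular cyclic action---then the top action is by a $p$-group, which is coprime to $q$, so the Sylow $q$-subgroup of $X$ equals the full direct product $R^{k}$ with $R$ a Sylow $q$-subgroup of $T$, forcing $d_q(X)\ge k$. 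Balancing the depth of the orbits (controlled through the quotient permutation group $X\to\Sym(k)$, whose Sylow subgroups are quotients of those of $X$) against these direct-product contributions yields a bound of the shape $k\le d_p(X)\cdot q^{\,d_q(X)}$, which is of the required form.

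The hard part is to pass from this per-factor estimate to a bound on the \emph{total} multiplicity over the whole chief series. A naive induction on $|G|$ fails, because on quotienting by a minimal normal subgroup the invariants $d_p,d_q$ need not decrease, and a priori the number of non-abelian $pq$-chief-factors could be large. The point to be established is that each such chief factor forces \emph{fresh} generators into $\mathrm{Syl}_p(G)$ or $\mathrm{Syl}_q(G)$, so that the summed orbit-counts remain bounded by a function of $d_p(G),d_q(G)$. I would organise this by following the layers of the socle (generalized Fitting) series of $G$ and tracking, layer by layer, how the Sylow generators are consumed; reconciling the fact that a single Sylow generator may be shared between several chief factors with the need for a global bound is, I expect, the main obstacle, and is where the coprimality of $p$ and $q$ must again be used decisively.
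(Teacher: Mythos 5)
There are two genuine gaps here, and the first is fatal to the elementary route you propose. Orbit counting is indeed the right opening move (it is the paper's Lemma \ref{comporb}), but in your transitive situation it yields only that $k$ is a sum of at most $d_p(X)$ powers of $p$ and simultaneously a sum of at most $d_q(X)$ powers of $q$; that is, the digit sums satisfy $s_p(k)\le d_p(X)$ and $s_q(k)\le d_q(X)$. Your ``balancing'' argument disposes only of the degenerate case where the image of $X$ in $\Sym(k)$ is a $p$-group (there the Sylow $q$-subgroup fixes every factor, so Lemma \ref{comporb} applied at $q$ gives $k\le d_q(X)$); for a general transitive image no such dichotomy exists, and the hard case is precisely $k=p^m$ with few base-$q$ digits --- for instance $X$ inducing all of $\Sym(k)$ on the factors, where the Sylow orbit counts equal the digit sums and carry no further group-theoretic information. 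Bounding $k$ there amounts to showing $s_q(p^m)\to\infty$ with $m$, which is exactly the Senge--Straus theorem (Theorem \ref{senstr}), a deep Diophantine result (PV numbers; effective versions need Baker's method) that no amount of Sylow bookkeeping will reproduce; your claimed bound $k\le d_p(X)\,q^{d_q(X)}$ is unsubstantiated, and this is the precise point where the paper imports Theorem \ref{senstr}. A secondary problem: your sketch of the orbit bound $a_\ell\le d_\ell(X)$ does not assemble, because permutation quotients on distinct orbits need not be independent (a diagonal action defeats the direct-product surjection) and the inner $\ell$-elements on a fixed factor give subgroups of $\hat S_\ell$, not quotients; the paper proves this lemma via Tate's theorem (Theorem \ref{tate}), arguing that a subset of $\Comp_\pi(G)$ of size at most $d_p(G)$ already covers $E_\pi(G)\cap S_p$ modulo $\Phi(S_p)$ and hence has normal closure all of $E_\pi(G)$.

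The second gap is the aggregation step, which you correctly flag as unresolved; the paper's solution is to avoid chief-series bookkeeping entirely rather than to make your layer-by-layer accounting work. By induction on $|G|$ one may assume every non-trivial normal subgroup of $G$ has a composition factor of order divisible by $pq$, which forces $G$ to act faithfully on $E_\pi(G)$ with $\pi=\{p,q\}$; then all the relevant subnormal simple groups are counted in one stroke, $x=|\Comp_\pi(G)|$, with $s_p(x)\le d_p(G)$ and $s_q(x)\le d_q(G)$ by Lemma \ref{comporb}, so $x$ is bounded by Theorem \ref{senstr}. Everything above this layer is then controlled by setting $N=\bigcap\{N_G(Q)\mid Q\in\Comp_\pi(G)\}$: one has $|G/N|\le x!$, while $N/E_\pi(G)$ is \emph{soluble} by the Schreier conjecture (Theorem \ref{schconj}) --- an ingredient entirely absent from your sketch, and without it the composition factors sitting above the components cannot be bounded, whatever happens at the bottom. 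Your deduction of the second assertion from the first, via the fact that every non-abelian finite simple group has order divisible by $6$ or $10$, coincides with the paper's Theorem \ref{simordthm} and is correct as stated.
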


\section{Preliminaries}

\begin{defn}Given a set of primes $\pi$, write $O^\pi(G)$ be the smallest normal subgroup of $G$ such that $G/O^\pi(G)$ is a $\pi$-group, and write $O_\pi(G)$ for the largest normal $\pi$-subgroup of $G$.\end{defn}

Tate's $p$-complement theorem, as given below, plays a critical role in the proofs in this paper.  (Tate actually proved somewhat more, but the form below is sufficient for the purposes of this paper.)

\begin{thm}[Huppert \cite{Hup}; Tate \cite{Tat}]\label{tate}Let $G$ be a finite group, and let $S_p$ be a Sylow $p$-subgroup of $G$ for some prime $p$.  Let $N \unlhd G$ such that $N \cap S_p \leq \Phi(S_p)$ for some prime $p$.  Then $O^p(N)$ is a $p'$-group.  In particular, if $N \cap S_p \leq \Phi(S_p)$ for every $p$ then $N$ is nilpotent.\end{thm}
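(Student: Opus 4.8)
The plan is to prove the two assertions in turn, reducing the nilpotency statement to the normal $p$-complement statement and then attacking the latter by induction. First note that, because $N \unlhd G$, the subgroup $N \cap S_p$ is a Sylow $p$-subgroup of $N$; and to say $O^p(N)$ is a $p'$-group is the same as to say $N$ has a normal $p$-complement (so that $O^p(N) = O_{p'}(N)$). For the final sentence, if $N \cap S_p \le \Phi(S_p)$ for every prime $p$ then $N$ has a normal $p$-complement for every $p$, and a finite group with this property is the direct product of its Sylow subgroups, hence nilpotent. So everything reduces to the main claim for a single fixed prime $p$.

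For the main claim I would induct on $|N|$, exploiting two reductions that both shrink $N$ while preserving the hypotheses. The first rests on the identity $O^p(O^p(N)) = O^p(N)$: writing $M = O^p(N)$, the group $O^p(M)$ is characteristic in $M \unlhd N$, hence normal in $N$, and $N/O^p(M)$ is an extension of the $p$-group $M/O^p(M)$ by the $p$-group $N/M$, hence a $p$-group, so minimality of $M$ forces $O^p(M) = M$. Now $M = O^p(N)$ is characteristic in $N$, hence normal in $G$, with $M \cap S_p \le N \cap S_p \le \Phi(S_p)$; so if $M \lneq N$ the inductive hypothesis applied to $M$ yields that $O^p(M) = M$ is a $p'$-group, as wanted. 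The second reduction passes to $N/O_{p'}(N)$: since $O_{p'}(N)$ is characteristic in $N$ it is normal in $G$, the hypothesis $N \cap S_p \le \Phi(S_p)$ survives in the quotient (images of Frattini subgroups lie in Frattini subgroups, and $S_p \cap O_{p'}(N) = 1$), and a normal $p$-complement in the quotient lifts to one in $N$. Hence a minimal counterexample $N$ satisfies $O^p(N) = N$ and $O_{p'}(N) = 1$, and for such an $N$ having a normal $p$-complement is equivalent to $N \cap S_p = 1$.

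So the heart of the matter is the core case: $N \unlhd G$ with $O^p(N) = N$ and $Q := N \cap S_p \le \Phi(S_p)$, to show $Q = 1$. Here $O^p(N) = N$ says the maximal $p$-quotient of $N$ is trivial, so $H^1(N;\bF_p) = \mathrm{Hom}(N,\bF_p) = 0$; equivalently, by the focal subgroup theorem applied inside $N$, the Sylow subgroup is its own focal subgroup, $Q = \langle u^{-1}u^{n} : u,\, u^{n} \in Q,\ n \in N\rangle$. The remaining task is to feed in the genuinely $G$-theoretic hypothesis $Q \le \Phi(S_p)$ and force $Q = 1$. My approach would be to use the Frattini argument $G = N\,N_G(Q)$ together with the transfer homomorphism $S_p \to S_p/\Phi(S_p)$ to pin down the $G$-fusion of $Q$ inside $S_p$, and then to apply Tate's theorem in its cohomological form to convert this control of $p$-fusion into the vanishing of $Q$.

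I expect this last step to be the main obstacle, and it is precisely where Tate's contribution lies. Transfer and the focal subgroup theorem only ever detect the abelianisation of $N$, whereas the conclusion concerns the full, possibly non-abelian $p$-quotient $N/O^p(N)$, and the hypothesis sits one level deeper than $H^1$ can see (inside $\Phi(S_p)$ rather than merely inside $S_p$). Bridging this gap is exactly what the cohomological stability (``doubling'') argument accomplishes, and here I would follow the original treatments of Tate and of Huppert rather than attempt a self-contained transfer computation. By contrast the reductions of the previous paragraph are routine, so that all the genuine difficulty is concentrated in showing that a normal subgroup generated by its $p'$-elements cannot conceal a nontrivial Sylow $p$-subgroup within $\Phi(S_p)$.
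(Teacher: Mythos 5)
The first thing to say is that the paper contains no proof of this statement at all: it is quoted as a known result, Theorem~\ref{tate}, with attribution to Huppert \cite{Hup} and Tate \cite{Tat}, and is then used as a black box. So there is no internal argument for your attempt to diverge from, and your ultimate deferral to ``the original treatments of Tate and of Huppert'' is in fact exactly the paper's own stance.

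Judged as a proof, however, your proposal has a genuine gap, and it sits precisely where you place it. Your preliminary work is all correct: the equivalence of ``$O^p(N)$ is a $p'$-group'' with the existence of a normal $p$-complement, the deduction of nilpotency from normal $p$-complements at every prime, the identity $O^p(O^p(N)) = O^p(N)$, the passage to $N/O_{p'}(N)$ (where the hypothesis survives via the modular law, $N \cap S_pO_{p'}(N) = (N \cap S_p)O_{p'}(N) \leq \Phi(S_p)O_{p'}(N)$, and $S_p \cap O_{p'}(N) = 1$), and the observation via the focal subgroup theorem that $O^p(N) = N$ forces $Q = N \cap S_p$ to equal its own focal subgroup. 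But none of these reductions lowers the difficulty of the core case: the assertion that $O^p(N) = N$ together with $N \cap S_p \leq \Phi(S_p)$ forces $N \cap S_p = 1$ \emph{is} the full strength of the theorem, and your stated plan for it --- ``apply Tate's theorem in its cohomological form'' --- is circular as a step in a proof of Tate's theorem. The honest reading of your write-up is therefore: correct routine reductions, plus a citation for the entire mathematical content (Tate's argument via the vanishing of the inflation-restriction comparison in mod-$p$ cohomology, or Huppert's transfer-theoretic version). That is a legitimate way to present a quoted theorem, and it matches how the paper uses it, but it should not be mistaken for a self-contained proof; as you yourself note, transfer and $H^1$ alone cannot see past the abelianisation, and bridging from $\Phi(S_p)$-level fusion data to the full $p$-quotient is exactly the non-elementary ingredient you have left unproved.
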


Here are some useful consequences.

\begin{cor}\label{tatecor}\
\vspace{-10pt}
\begin{enumerate}[(i)]  \itemsep0pt
\item Let $M \unlhd G$ and $N \unlhd G$ such that $M \cap S_p \leq \Phi(S_p)N$ for some prime $p$.  Then $O^p(MN/N)$ is a $p'$-group.  In particular, if $M \cap S_p \leq \Phi(S_p)N$ for every $p$ then $MN/N$ is nilpotent.
\item Let $N \unlhd G$ such that $N \cap S_p \leq \Phi(S_p)$ for every $p$.  Then $F(G/N) = F(G)/N$.
\end{enumerate}\end{cor}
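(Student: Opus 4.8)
The plan is to deduce both parts from Tate's theorem (Theorem \ref{tate}) by passing to suitable subquotients, using two standard facts: that for a finite $p$-group $P$ and $K \unlhd P$ one has $\Phi(P/K) = \Phi(P)K/K$ (as $\Phi(P) = P^p[P,P]$), and that $O^q(\,\cdot\,)$ commutes with quotients and is idempotent.

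For (i), set $\bar G = G/N$ and $\bar M = MN/N \unlhd \bar G$. Since the image of a Sylow $p$-subgroup under a surjection is again Sylow, $\bar S_p := S_p N/N$ is a Sylow $p$-subgroup of $\bar G$, and by the Frattini fact $\Phi(\bar S_p) = \Phi(S_p)N/N$. The key point is to identify $\bar M \cap \bar S_p$: because $M \unlhd G$, the group $P := M \cap S_p$ is a Sylow $p$-subgroup of $M$, so $PN/N$ is a Sylow $p$-subgroup of $\bar M$; as $\bar M \unlhd \bar G$, the intersection $\bar M \cap \bar S_p$ is itself a Sylow $p$-subgroup of $\bar M$ containing $PN/N$, whence $\bar M \cap \bar S_p = PN/N$. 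Now the hypothesis $M \cap S_p = P \le \Phi(S_p)N$ gives $\bar M \cap \bar S_p = PN/N \le \Phi(S_p)N/N = \Phi(\bar S_p)$, and Tate's theorem applied to $\bar M \unlhd \bar G$ yields that $O^p(\bar M) = O^p(MN/N)$ is a $p'$-group. The ``in particular'' clause follows by running this for every $p$ and invoking the nilpotency conclusion of Theorem \ref{tate} in $\bar G$.

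For (ii), first note that $N$ is nilpotent by Theorem \ref{tate}, so $N \le F(G)$ and $F(G)/N$ is a nilpotent normal subgroup of $G/N$; hence $F(G)/N \le F(G/N)$. For the reverse inclusion, let $K \unlhd G$ be the preimage of $F(G/N)$, so $K/N = F(G/N)$ is nilpotent; it suffices to prove $K$ nilpotent, for then $K \le F(G)$. I will show $K$ is $p$-nilpotent for every $p$, i.e. $O^p(K)$ is a $p'$-group, which by the standard criterion (a finite group is nilpotent iff it has a normal $p$-complement for every prime) gives nilpotency of $K$. Fix $p$ and put $L := O^p(K)$, characteristic in $K$ and hence normal in $G$. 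Since $O^p$ commutes with quotients, $LN/N = O^p(K/N)$ is a $p'$-group because $K/N$ is nilpotent; hence $|L : L \cap N|$ is coprime to $p$, so the normal subgroup $N \cap L$ of $L$ contains a full Sylow $p$-subgroup of $L$, and therefore $L \cap S_p \le N \cap S_p \le \Phi(S_p)$ by hypothesis. Applying Theorem \ref{tate} to $L \unlhd G$ then shows $O^p(L)$ is a $p'$-group; and since $O^p$ is idempotent, $O^p(L) = O^p(O^p(K)) = O^p(K) = L$, so $L = O^p(K)$ is a $p'$-group, as required.

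The main obstacle in (i) is the bookkeeping identifying $\bar M \cap \bar S_p$ with the image of the Sylow $p$-subgroup $M \cap S_p$ of $M$, so that the hypothesis can be fed into the quotient; this hinges on the normality of $M$. The main obstacle in (ii) is arranging the hypothesis of Tate's theorem for $L = O^p(K)$: the nilpotency of $K/N$ is what forces the Sylow $p$-subgroup of $L$ into $N$, after which the hypothesis $N \cap S_p \le \Phi(S_p)$ applies, and the idempotency of $O^p$ is what converts Tate's conclusion ``$O^p(L)$ is $p'$'' into the desired ``$L$ is $p'$''.
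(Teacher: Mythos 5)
Your proof is correct, and the two parts should be assessed separately. Part (i) follows the paper's route: the paper declares it immediate from Theorem \ref{tate} applied in $G/N$, recording only that $S_pN/N$ is a Sylow $p$-subgroup of $G/N$ and that $\Phi(S_pN/N)=\Phi(S_p)N/N$; your identification $(MN/N)\cap(S_pN/N)=(M\cap S_p)N/N$, obtained from $M\cap S_p$ being a Sylow $p$-subgroup of $M$, is exactly the bookkeeping the paper leaves implicit, and it is genuinely needed, since for arbitrary subgroups $MN\cap S_pN$ can exceed $(M\cap S_p)N$. Part (ii), however, is a genuinely different argument from the paper's. The paper works prime by prime with the lift $T_p\unlhd G$ of the Sylow $p$-subgroup of $F(G/N)$: it observes that $T_p\cap S_q=N\cap S_q\le\Phi(S_q)$ for every prime $q\ne p$, applies Tate at all the \emph{complementary} primes $q$ to conclude that $O^{p'}(T_p)$ is a $p$-group, hence a normal Sylow $p$-subgroup of the full lift $T$ of $F(G/N)$, so that $T$ is nilpotent and lies in $F(G)$. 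You instead keep the full preimage $K$ intact and apply Tate at the prime $p$ \emph{itself}, to $L=O^p(K)$: nilpotency of $K/N$ together with $O^p$ commuting with quotients forces the Sylow $p$-subgroups of $L$ into $N$, whence $L\cap S_p\le N\cap S_p\le\Phi(S_p)$; Tate then gives that $O^p(L)$ is a $p'$-group, and idempotency of $O^p$ upgrades this to $L$ itself being a $p'$-group, i.e.\ $K$ is $p$-nilpotent for every $p$ and hence nilpotent by the standard criterion. In short, the paper shows each $T_p$ is $p$-closed by invoking Tate at the primes $q\ne p$, while you show $K$ is $p$-nilpotent by invoking Tate at $p$. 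Your version buys a single subgroup to manage and a clean per-prime statement, at the price of the auxiliary facts about $O^p$ (commuting with quotients, idempotency); the paper's version avoids those facts but must separately check that $O^{p'}(T_p)$ is Sylow in all of $T$, not just in $T_p$. Both arguments are complete, and your opening remark that $N\le F(G)$ (via the ``in particular'' clause of Theorem \ref{tate}) correctly grounds the easy inclusion $F(G)/N\le F(G/N)$, which the paper states without comment.
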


\begin{proof}(i) This follows immediately from the theorem, noting that $S_pN/N$ is a Sylow $p$-subgroup of $G/N$, and that $\Phi(S_pN/N) = \Phi(S_p)N/N$.

(ii) For each prime $p$, let $T_p$ be the lift of the Sylow $p$-subgroup of $F(G/N)$ to $G$; note that $T_p \unlhd G$.  Let $q$ be a prime distinct from $p$.  Then $T_p \cap S_q = N \cap S_q \leq \Phi(S_q)$, so $O^q(T_p)$ is a $q'$-group.  It follows that $O^{p'}(T_p)$ is a $p$-group, and hence the unique Sylow $p$-subgroup of $T_p$.  Now let $T$ be the lift of $F(G/N)$ to $G$.  Then $T$ has a normal Sylow $p$-subgroup for every $p$, namely $O^{p'}(T_p)$, so $T$ is nilpotent, and hence $T \leq F(G)$, in other words $F(G/N) \leq F(G)/N$.  In the other direction, $F(G)/N$ is nilpotent and thus contained in $F(G/N)$.\end{proof}

The bound obtained in case (i) of Theorem A is based on the following:

\begin{thm}[Wolf \cite{Wol}]\label{nilplin}Let $G$ be a nilpotent subgroup of $\GL(n,p^e)$ of order coprime to $p$.  Then $\log_p(2|G|) \leq en \log 32/\log 9$.\end{thm}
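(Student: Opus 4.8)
The plan is to prove the equivalent statement $2|G| \le q^{cn}$, where $q = p^e$ and $c := \log 32/\log 9$, by induction on $n$. The factor of $2$ is what makes the target submultiplicative: since $p \nmid |G|$, Maschke's theorem applies and if $V = \bF_q^n$ is reducible we may write $V = V_1 \oplus V_2$ as $\bF_q G$-modules with $\dim V_i = n_i > 0$. Then $G$ embeds in $G_1 \times G_2$ with each $G_i \le \GL(n_i,q)$ nilpotent, so by induction $2|G| \le 2|G_1||G_2| = \tfrac12(2|G_1|)(2|G_2|) \le \tfrac12 q^{cn_1}q^{cn_2} = \tfrac12 q^{cn} \le q^{cn}$. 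Hence I may assume $V$ is irreducible.

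Next I would arrange that $V$ is absolutely irreducible. If $\End_{\bF_q G}(V) = \bF_{q^s}$ with $s > 1$, then $V$ is an absolutely irreducible $\bF_{q^s}G$-module of dimension $n/s$; replacing $(q,n)$ by $(q^s, n/s)$ leaves both $|G|$ and the invariant $en$ (and hence the target bound $q^{cn} = (q^s)^{c(n/s)}$) unchanged. I then reduce to the \emph{primitive} case over $\bF_q$. If $V$ is induced, write $V = W_1 \oplus \dots \oplus W_k$ with $k > 1$ blocks of dimension $n/k$ permuted transitively; let $T \le \Sym(k)$ be the image of the block action and $K$ its kernel. Writing $\bar H \le \GL(n/k,q)$ for the image of a block stabiliser, transitivity gives $K \hookrightarrow \prod_i \GL(W_i)$ with all projections isomorphic to subgroups of $\bar H$, so $|K| \le |\bar H|^k$ and $|G| = |K|\,|T| \le |\bar H|^k |T|$. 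By induction $|\bar H| \le q^{cn/k}/2$, whence $2|G| \le q^{cn}\,|T|/2^{k-1}$. It therefore suffices to establish the lemma that \emph{a transitive nilpotent permutation group of degree $k$ has order at most $2^{k-1}$}: such a group is the direct product of transitive $\ell$-groups of degrees $k_\ell = \ell^{a_\ell}$ with $\prod_\ell k_\ell = k$, each of order at most $|\Sylp\nolimits(\Sym(\ell^{a_\ell}))| = \ell^{(\ell^{a_\ell}-1)/(\ell-1)} \le 2^{k_\ell - 1}$ (using $\ell^{1/(\ell-1)} \le 2$), and $\sum_\ell(k_\ell - 1) \le \prod_\ell k_\ell - 1 = k-1$. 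This gives $|T| \le 2^{k-1}$ and closes the imprimitive case.

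In the primitive, absolutely irreducible case I would analyse the structure via a maximal abelian normal subgroup $A$. Here $A = Z(C_G(A))$ is cyclic, and by Clifford theory plus primitivity $V|_A$ is homogeneous, say $V|_A = U^{\oplus r}$ with $\bF_{q^t} = \End_{\bF_q A}(U)$ and $n = rt$; faithfulness embeds $A$ into $\bF_{q^t}^\times$, so $|A| \le q^t - 1$. Since $G$ is primitive and nilpotent, $C := C_G(A)$ is of symplectic type over $\bF_{q^t}$ (every characteristic abelian subgroup of $C$ is normal in $G$, hence contained in $A$, hence cyclic), so $C/A$ is extraspecial-type of order $\prod_\ell \ell^{2m_\ell} = r^2$ modulo scalars already counted in $A$, while $G/C$ injects into $\mathrm{Gal}(\bF_{q^t}/\bF_q)$ and so has order at most $t$. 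Combining, $|G| \le (q^t - 1)\,r^2\,t$, and the target $2|G| \le (q^t)^{cr}$ becomes a numerical inequality in $(q,r,t)$.

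The main obstacle is precisely this primitive case, because of the interaction between the group structure and the field of definition. Over the splitting field every abelian normal subgroup acts by scalars and $G$ itself is of symplectic type, but over $\bF_q$ this can fail dramatically: for instance $\bZ_8 \le \GL(2,3)$ acts \emph{irreducibly}, not by scalars, which is exactly why the extra field degree $t$ appears and why $en$, rather than $n$ alone, is the correct invariant. The constant $c = \log 32/\log 9$ is calibrated so that the numerical inequality is \emph{tight} at the extremal configuration $G = \Sylp\nolimits(\GL(2,3))$ of order $16$, with $n = 2$, $q = 3$, $e = 1$: there $2|G| = 32 = 3^{2c} = q^{cn}$, with equality. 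All remaining small cases (in particular $r=1$ and $q\in\{2,3\}$) are checked directly and carry slack, so the induction goes through; the bulk of the work is the careful bookkeeping transferring the symplectic-type description from $\bar\bF_p$ down to $\bF_q$ through the intermediate field $\bF_{q^t}$.
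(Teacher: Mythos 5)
The paper does not prove this statement at all: it is imported verbatim from Wolf \cite{Wol} and used as a black box, so your attempt can only be measured against Wolf's published argument, whose overall architecture you reproduce correctly. Your reductions are sound: the factor $2$ making the bound submultiplicative under the Maschke decomposition (with the base case $n=1$ checkable directly); the passage to $\mathrm{End}_{\bF_qG}(V)=\bF_{q^s}$, which preserves the invariant $en$ and explains why the theorem is stated via $\log_p$; and the imprimitive case, where your lemma that a transitive nilpotent subgroup of $\Sym(k)$ has order at most $2^{k-1}$ is correct as proved (Sylow orders $\ell^{(\ell^{a}-1)/(\ell-1)} \leq 2^{\ell^{a}-1}$ together with $\sum_\ell (k_\ell - 1) \leq \prod_\ell k_\ell - 1$).

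The genuine gap is the primitive case. You reduce it to the ``numerical inequality'' $2(q^t-1)\,r^2\,t \leq (q^t)^{cr}$, but this inequality is \emph{false}: at $(q,r,t)=(2,1,2)$ the left side is $12$ while $(2^2)^{c} \approx 8.9$, and at $(2,1,3)$ it is $42$ versus $8^{c}\approx 26.6$; indeed it fails for $q=2$, $r=1$ and every $2 \leq t \leq 6$. The theorem survives in these configurations only because a nilpotent group cannot simultaneously realise a large torus part $A$ and a large Galois part $G/C$: an element inducing $x \mapsto x^{q^i}$ on $E = \bF_{q^t}$ conjugates $a \in A \leq E^\times$ to $a^{q^i}$, so nilpotency forces the iterated commutators $a^{(q^i-1)^k}$ to vanish, severely restricting $|A|$ whenever $G/C \neq 1$. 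Concretely, $\Gamma\mathrm{L}(1,8) \cong C_7 \rtimes C_3$ is a Frobenius group whose nilpotent subgroups have order at most $7$, far below your bound $(q^t-1)t = 21$; the product bound $|G| \leq (q^t-1)\,r^2\,t$ discards exactly the information that saves the day. Your closing claim that the remaining small cases ``carry slack'' is therefore the opposite of the truth: under your bound they fail, and no actual verification is offered. This joint control of $|A|\cdot|G/C|$ in the semilinear part (and its refinement for $r>1$) is precisely the hard quantitative content of Wolf's proof. The gap is patchable in principle --- the failing parameter triples are finitely many, and each can be settled by bounding nilpotent subgroups of $\Gamma\mathrm{L}(1,q^t)$-type configurations directly --- but as written the primitive case, which you yourself identify as the heart of the matter, is not proved.
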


For case (ii) of Theorem A, a bound is required on the order and number of the simple subnormal subgroups of $G/N$.  For the order of simple factors, a bound emerges from $\pmax(G)$:

\begin{thm}[Babai, Goodman and Pyber \cite{BGP}: Theorem 5.4 and subsequent remark]  Let $k$ be any positive integer.  Suppose $G$ is a finite simple group whose order has no prime divisor greater than $k$.  Then $|G| < k^{k^2}$, and in fact $|G| < c^{k^2}$ for an absolute constant $c$.\end{thm}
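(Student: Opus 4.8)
The plan is to prove the statement via the classification of finite simple groups (CFSG), reducing to the case of groups of Lie type, where the real work lies. For $G$ cyclic of prime order the bound is trivial, and there are only finitely many sporadic groups, so their orders are bounded by an absolute constant. For an alternating group $A_n$ with $n \ge 5$, the set of primes dividing $|A_n| = n!/2$ is exactly the set of primes at most $n$; Bertrand's postulate supplies a prime in the interval $(k,2k)$, so if $n \ge 2k$ this prime would divide $|G|$ while exceeding $k$. Hence $n < 2k$ and $|G| < n^n < (2k)^{2k}$, whose logarithm is $O(k\log k)$, so it satisfies both asserted bounds once $k$ is large (the finitely many remaining cases being absorbed into the constant $c$).

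So assume $G$ is a simple group of Lie type in characteristic $p$ over $\bF_q$ with $q = p^f$ and (untwisted) Lie rank $r$; note $p \le k$ since $p \mid |G|$. Two facts drive the argument. First, the order formula gives $\log|G| \le \dim\cdot\log q + O(1)$, where the dimension of the ambient algebraic group satisfies $\dim \le C r^2$ for an absolute constant $C$ (for instance $\dim \le 2r^2 + r$ for classical types, while the exceptional types have bounded rank); thus $\log|G| \le C r^2 f \log p$. Second, $|G|$ is divisible by a product $\prod_i (q^{d_i} - \varepsilon_i)$, where the $d_i$ are the degrees (or their twisted analogues) and the largest is the Coxeter number $h \ge r$. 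I will extract two constraints from this product using Zsigmondy's theorem on primitive prime divisors.

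The rank is controlled as follows. For each $i$ in a range of length at least $r$ (for type $A_r$ the factors $q^2-1, q^3-1, \dots, q^{r+1}-1$ all divide $|G|$), Zsigmondy provides a primitive prime divisor $\ell_i$ of $q^i-1$, with only finitely many exceptional $i$; these $\ell_i$ are pairwise distinct and each satisfies $\ell_i \le k$. Since there are $\pi(k)$ primes up to $k$, this forces $r \le \pi(k) + O(1) = O(k/\log k)$. The field parameter is controlled by passing to base $p$: writing $q^h - 1 = p^{fh}-1$, a primitive prime divisor in base $p$ is congruent to $1$ modulo $fh$ and hence exceeds $fh \ge fr$; as it divides $|G|$ it is at most $k$, so $fr \le fh < k$. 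Combining the order formula with these two bounds,
\[ \log|G| \le C r^2 f \log p = C\,r\,(rf)\,\log p = O\!\left(\tfrac{k}{\log k}\cdot k \cdot \log k\right) = O(k^2), \]
which gives $|G| < c^{k^2}$ for an absolute constant $c$, and a fortiori $|G| < k^{k^2}$.

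The main obstacle is the bookkeeping in the Lie type case: one must (i) verify the dimension bound $\dim \le Cr^2$ and the divisibility of $|G|$ by enough factors $q^{d_i}-\varepsilon_i$ uniformly across the classical, twisted, and exceptional families; (ii) handle the finitely many exceptional cases of Zsigmondy's theorem (such as $(q,i)=(2,6)$ and the Mersenne cases for $q^2-1$) so that they cost only an $O(1)$ loss in the rank bound; and (iii) confirm that in the twisted families the relevant factors are of the form $q^i+1 = \Phi_{2i}(q)$, whose primitive primes are $\equiv 1 \pmod{2i}$ and are again distinct, so that the rank bound $r = O(\pi(k))$ survives. The crucial quantitative point, and the reason the exponent is $O(k^2)$ rather than $O(k^2\log k)$, is precisely that the distinct primitive prime divisors force $r = O(k/\log k)$, the extra factor $1/\log k$ exactly compensating the $\log p \le \log k$ contributed by the characteristic.
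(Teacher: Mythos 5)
The first thing to say is that the paper contains no proof of this statement for you to be compared against: it is imported verbatim from Babai, Goodman and Pyber \cite{BGP} (Theorem 5.4 and the subsequent remark) and used as a black box, so your reconstruction can only be judged on its own merits. On those merits, the skeleton is sound and is the natural CFSG route. The sporadic and prime-order cases are trivial; Bertrand's postulate does force $n < 2k$ for an alternating group of degree $n$, giving $\log|G| = O(k\log k)$ there; and in the Lie-type case your two Zsigmondy constraints are exactly right: the primitive prime divisors of $q^2-1,\dots,q^{r+1}-1$ (or of $q^{2i}-1$ in the other classical families, the exceptional families having bounded rank) are pairwise distinct primes at most $k$, forcing $r \le \pi(k)+O(1)$, while a base-$p$ primitive prime of $p^{fh}-1$ is congruent to $1$ modulo $fh$ and so forces $fh < k$. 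Combined with $\log|G| \le \dim\cdot f\log p$ and $\dim \le Cr^2$, this gives $\log|G| \le Cr\cdot(rf)\cdot\log p = O(k^2)$, and your closing observation --- that the $1/\log k$ saving in $\pi(k)$ exactly cancels $\log p \le \log k$ --- is indeed the crux of getting exponent $k^2$ rather than $k^2\log k$. The obstacles you list under (i)--(iii) are real but routine: for instance, passing to the simple quotient is harmless because a primitive prime divisor of $q^{r+1}-1$ cannot divide $\gcd(r+1,q-1)$, which divides $q-1$, and the finitely many Zsigmondy exceptions cost only $O(1)$ in the rank count. One small logical slip worth fixing: the deduction ``$|G| < c^{k^2}$, a fortiori $|G| < k^{k^2}$'' is valid only when $k \ge c$; for the finitely many values $k < c$ (note $k \ge 5$ for any non-abelian simple group, since by Burnside's $p^aq^b$-theorem its order has at least three prime divisors) the bound $k^{k^2}$ requires a separate, finite verification rather than following a fortiori. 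With that repaired, what you have is a correct programme whose remaining work is genuinely bookkeeping, in the same spirit as the cited source's CFSG-based argument.
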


Indeed, given the very restricted size and structure of outer automorphism groups of non-abelian finite simple groups, similar bounds apply to the order of $\Aut(G)$.  Certainly $|\Aut(G)| < |G|^2$ in all cases (see for instance Lemma 2.2 of \cite{Qui}), which leads to the following:

\begin{cor}\label{lambdafsg}  Let $G$ be a non-abelian finite simple group.  Then $|\Aut(G)| < b^{\pmax(G)^2}$ for an absolute constant $b$.\end{cor}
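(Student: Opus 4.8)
The plan is to combine the Babai--Goodman--Pyber bound with the estimate $|\Aut(G)| < |G|^2$ quoted just above. Set $\pmax = \pmax(G)$, the largest prime dividing $|G|$. By definition no prime divisor of $|G|$ exceeds $\pmax$, so the cited theorem of Babai, Goodman and Pyber applies with parameter $k = \pmax$, yielding $|G| < c^{\pmax^2}$ for an absolute constant $c$.

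I would then invoke the automorphism estimate directly. Since $G$ is a non-abelian finite simple group, Lemma 2.2 of \cite{Qui} gives $|\Aut(G)| < |G|^2$. As both sides of the order bound are positive, squaring preserves the inequality, and composing the two estimates gives
\[ |\Aut(G)| < |G|^2 < \left(c^{\pmax^2}\right)^2 = \left(c^2\right)^{\pmax^2}. \]
Hence the claim holds with the absolute constant $b = c^2$.

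There is no substantial obstacle here: the corollary is a direct composition of the two displayed inputs, and the bulk of the work has already been done in establishing those inputs (in particular, the deep classification-dependent content sits inside the Babai--Goodman--Pyber theorem). The only point meriting a word of care is that the largest prime $\pmax(G)$ is a legitimate value of the parameter $k$, which is immediate from the definition, and that the resulting constant $b$ depends on nothing but the absolute constant $c$ supplied by that theorem.
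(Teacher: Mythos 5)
Your proposal is correct and matches the paper's own (implicit) argument exactly: the paper derives Corollary~\ref{lambdafsg} from the Babai--Goodman--Pyber bound $|G| < c^{\pmax^2}$ combined with the quoted estimate $|\Aut(G)| < |G|^2$, giving $b = c^2$ just as you do. Nothing further is needed.
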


For the number of non-abelian composition factors, on the other hand, the critical constraint for our purposes is the number of generators of the Sylow subgroups.

\begin{defn}Let $\Comp(G)$ be the set of non-abelian simple subnormal subgroups of $G$.  Given a set of primes $\pi$, let $\Comp_\pi(G)$ consist of those $Q \in \Comp(G)$ such that $p$ divides $|Q|$ for all $p \in \pi$, and let $E_\pi(G) = \langle \Comp_\pi(G) \rangle$.  Note that $E_\pi(G)$ is the direct product of the elements of $\Comp_\pi(G)$.\end{defn}

\begin{lem}\label{comporb}Let $\pi$ be a set of primes containing $p$.  Let $n$ be the number of orbits of $S_p$ acting on $\mcK$ by conjugation.  Then $n \leq d_p(G)$.\end{lem}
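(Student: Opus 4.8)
The plan is to translate the orbit count into a lower bound on $\dim_{\bF_p} S_p/\Phi(S_p) = d_p(G)$ by exhibiting $n$ linearly independent vectors there, one attached to each orbit, with Tate's theorem certifying their independence.

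First I would fix the arithmetic. Write $\mcK=\Comp_\pi(G)=\{Q_1,\dots,Q_m\}$; since $p\in\pi$ each $Q_i$ has order divisible by $p$. Put $E=E_\pi(G)=Q_1\times\cdots\times Q_m$ and form $G^\ast=S_pE$. As $E\unlhd G$ and $S_p$ normalises $E$, the set $G^\ast$ is a subgroup of $G$, and a Sylow count gives $S_p\in\Sylp(G^\ast)$. Because $p\mid|Q_i|$ we have $P_i:=S_p\cap Q_i\ne1$, and $T:=S_p\cap E=P_1\times\cdots\times P_m$ is a Sylow $p$-subgroup of $E$. Now $S_p$ permutes the factors $Q_i$ by conjugation; let $\Omega_1,\dots,\Omega_n$ be the orbits (so $n$ is the quantity in the statement). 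For each $j$ set $E_j=\prod_{i\in\Omega_j}Q_i$ and $F_j=\prod_{i\notin\Omega_j}Q_i$; being unions of $S_p$-orbits and subproducts of the direct product $E$, both are normalised by $S_p$ and by $E$, so $E_j,F_j\unlhd G^\ast$ and $E=E_j\times F_j$. Write $B_j=S_p\cap E_j$, let $V=S_p/\Phi(S_p)$, and let $W_j\le V$ be the image of $B_j$.

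The heart of the argument is the claim that $W_j\not\subseteq\sum_{k\ne j}W_k$ for every $j$, which I would prove by contradiction using the relative form of Tate's theorem. Suppose $W_j\subseteq\sum_{k\ne j}W_k$. Since $S_p\cap F_j=\prod_{k\ne j}B_k$, Dedekind's law gives $S_p\cap\Phi(S_p)F_j=\Phi(S_p)(S_p\cap F_j)=\Phi(S_p)\prod_{k\ne j}B_k$, so the hypothesis is exactly $E_j\cap S_p=B_j\le\Phi(S_p)F_j$. Applying Corollary \ref{tatecor}(i) inside $G^\ast$ with $M=E_j$ and $N=F_j$ then forces $O^p(E_jF_j/F_j)$ to be a $p'$-group. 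But $E_jF_j/F_j\cong E_j$ is a direct product of non-abelian simple groups each of order divisible by $p$, whence $O^p(E_j)=E_j$, which is not a $p'$-group — a contradiction. Hence each $W_j$ escapes the span of the others.

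Finally I would assemble these into independence: choosing $w_j\in W_j\setminus\sum_{k\ne j}W_k$ for each $j$, a one-line argument shows $w_1,\dots,w_n$ are linearly independent in $V$ (if $\sum_j c_jw_j=0$ with some $c_{j_0}\ne0$, then $w_{j_0}\in\sum_{k\ne j_0}W_k$). Therefore $d_p(G)=\dim_{\bF_p}V\ge n$, as required. The step I expect to be the main obstacle — and the reason a softer argument fails — is that one cannot simply read the orbits off inside $T$: the natural attempt, counting $S_p$-fixed points on $T/\Phi(T)$, produces $\ge n$ invariant vectors, but these may all lie in $\Phi(S_p)$, since a generator of $S_p$ realising an orbit can sit outside $T$ as a $p$-power root of an element of $T$ (as already happens when $S_p\cong C_{p^2}\times C_{p^2}$ meets two cyclic factors in its Frattini subgroup). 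Tate's theorem is precisely what rules out this collapse, certifying that each orbit genuinely costs a generator; the main care needed is the passage to $G^\ast=S_pE$, which makes the individual orbit-products $E_j$ normal so that the relative corollary applies.
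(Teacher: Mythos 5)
Your proof is correct, but it arranges the key Tate application dually to the paper. Both arguments start identically --- replace $G$ by $E_\pi(G)S_p$ (your $G^\ast$) and combine Corollary \ref{tatecor}(i) with linear algebra in $V=S_p/\Phi(S_p)$ --- but the paper runs a \emph{spanning} argument where you run an \emph{independence} argument. The paper selects a subset $\mcQ$ of at most $d_p(G)$ components whose Sylow intersections span the image of $E_\pi(G)\cap S_p$ in $V$, lets $N$ be the normal closure of $R=\langle\mcQ\rangle$, and applies the corollary once with $M=E_\pi(G)$: since $O^p(E_\pi(G)/N)$ is a $p'$-group while $E_\pi(G)/N$ is perfect of order divisible by $p$ unless trivial, $N=E_\pi(G)$, so every component is conjugate into $\mcQ$, and --- since $E_\pi(G)$ normalises each component --- conjugate by an element of $S_p$; hence $n\le|\mcQ|\le d_p(G)$. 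You instead apply the corollary once per orbit, with $M=E_j$ and $N=F_j$, to show each orbit subspace $W_j$ escapes the span of the others, and then harvest $n$ linearly independent vectors. The trade-off: the paper needs the normal-closure and conjugacy step (including the observation that the conjugating element can be taken in $S_p$), while you need the direct-product Sylow bookkeeping ($S_p\cap E=\prod_i(S_p\cap Q_i)$ plus Dedekind's law). Your version localises Tate to each orbit and in fact proves slightly more (no $W_j$ lies in the span of the rest, which in particular handles the degenerate possibility $W_j=0$, i.e.\ $B_j\le\Phi(S_p)$, that a naive fixed-point count cannot exclude); the paper's version yields the slightly different extra information that some set of at most $d_p(G)$ components meets every $S_p$-orbit. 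All your supporting steps check out: $E_j,F_j\unlhd G^\ast$ because orbits are $S_p$-stable and $E$ normalises each factor, $S_p\in\Sylp(G^\ast)$ since $|G^\ast:S_p|=|E:S_p\cap E|$ is prime to $p$, and $O^p(E_j)=E_j$ because $E_j$ is perfect with $p$ dividing its order.
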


\begin{proof}We may assume that $G = E_\pi(G)S_p$.  Since $|S_p:\Phi(S_p)|=p^{d_p(G)}$, there is a subset $\mcQ$ of $\Comp_\pi(G)$ such that $|\mcQ| \leq d_p(G)$ and such that $E_\pi(G) \cap S_p \leq (R \cap S_p)\Phi(S_p)$, where $R = \langle \mcQ \rangle$. Let $N$ be the normal closure of $R$ in $G$; clearly $N \leq E_\pi(G)$.  Then $O^p(E_\pi(G)/N)$ is a $p'$-group by Corollary \ref{tatecor}, and hence $E_\pi(G) = N$.  It follows that given $Q \in \Comp_\pi(G)$, there is some $x \in G$ such that $Q^x \in \mcQ$; since $E_\pi(G)$ normalises $Q$, there is a suitable $x$ in $S_p$.  Hence $n \leq |\mcQ| \leq d_p(G)$ as required.\end{proof}

We will also need to use the fact that a nilpotent permutation group has order that is bounded by an exponential function of the degree:

\begin{thm}[Vdovin \cite{Vdo}]\label{nilper}Let $N$ be a nilpotent subgroup of $\Sym(n)$ of largest possible order.  If $n = 2(2k+1)+1$ for some $k \in \bZ$, then $N$ is isomorphic to the direct product of a Sylow $2$-subgroup of $\Sym(n-3)$ with a cyclic group of order $3$; otherwise, $N$ is a Sylow $2$-subgroup of $\Sym(n)$.  In particular, the order of a nilpotent subgroup of $\Sym(n)$ is at most $2^n$.\end{thm}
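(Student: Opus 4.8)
The plan is to reduce to the transitive case, determine the maximal order $f(m)$ of a transitive nilpotent subgroup of $\Sym(m)$ exactly, and then solve an extremal problem over partitions of $n$, from which both the classification and the bound $2^n$ fall out. First I would decompose an arbitrary nilpotent $N\le\Sym(n)$ according to its orbits $\Omega_1,\dots,\Omega_t$ of sizes $n_1,\dots,n_t$. The diagonal map $N\to\prod_i N^{\Omega_i}$ into the product of the transitive constituents is injective, so $|N|\le\prod_i f(n_i)$; conversely an internal direct product of maximal transitive nilpotent groups placed on a partition of $\{1,\dots,n\}$ into parts of sizes $n_i$ is again nilpotent and realises $\prod_i f(n_i)$. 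Hence the maximal order equals $\max\prod_i f(n_i)$ over all partitions $n=\sum_i n_i$, and a maximal nilpotent subgroup is, up to permutation isomorphism, such a direct product.

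Next I would compute $f(m)$ from the structure of transitive nilpotent groups. Writing $N=\prod_i P_i$ as the product of its Sylow subgroups over the primes $p_i\mid|N|$, each $P_i$ is normal, so its orbits form a block system; a counting argument using $N_\alpha\ge\prod_i(P_i)_\alpha$ together with the fact that $|\alpha^{P_i}|$ divides $m$ forces every $P_i$-orbit to have the full $p_i$-part size $p_i^{c_i}$ with $c_i=v_{p_i}(m)$, and $N_\alpha=\prod_i(P_i)_\alpha$. Because the $P_i$-orbit sizes are pairwise coprime, the point set splits as a Cartesian product and $N$ embeds, in its product action, into $\prod_i W_i$ with $W_i$ a transitive $p_i$-group of degree $p_i^{c_i}$; the projection of $N$ onto each coordinate is $P_i$ acting transitively there, so $|P_i|\le|\mathrm{Syl}_{p_i}(\Sym(p_i^{c_i}))|=p_i^{(p_i^{c_i}-1)/(p_i-1)}$. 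Since the Sylow $p$-subgroup of $\Sym(p^c)$ is the transitive iterated wreath product attaining this order, this both bounds and achieves
\[ f(m)=\prod_{p^c\,\|\,m}p^{(p^c-1)/(p-1)}. \]
The step I expect to need the most care is establishing this Cartesian decomposition rigorously, i.e. showing that the commuting normal Hall subgroups split $\Omega$ as a product precisely because their orbit sizes are pairwise coprime (so that $\alpha^{P_i}\cap\alpha^{P_j}=\{\alpha\}$).

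Finally I would solve the extremal problem. From the formula, $f(2^c)=2^{2^c-1}$ and $f(3)=3$, while a short computation gives $f(m)<2^{\,m-s_2(m)}$ for every $m\ge4$ that is not a power of $2$, where $s_2$ denotes the binary digit sum; thus any part $\ge4$ that is not a $2$-power should be replaced by its binary refinement. Moreover amalgamating two parts of equal size $2^a$ into one of size $2^{a+1}$ multiplies the contribution by a further factor $2$, so an optimal all-$2$-power partition uses distinct powers, namely the binary expansion, yielding $|\mathrm{Syl}_2(\Sym(n))|=2^{\,n-s_2(n)}$. The only odd part that can improve on this is a single $C_3$, which beats the pair $\{2,1\}$ (factor $3$ versus $2$) and so is advantageous exactly when the binary expansion of $n$ contains both a $2^1$ and a $2^0$, that is, when $n\equiv3\pmod4$, i.e. $n=2(2k+1)+1$; two copies of $C_3$ on six points (factor $9$) lose to $\{4,2\}$ (factor $16$), and stealing points from a larger block to form a $C_3$ always loses, so no further odd part helps. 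This gives the claimed maximisers — a Sylow $2$-subgroup of $\Sym(n)$ in general, and $\mathrm{Syl}_2(\Sym(n-3))\times C_3$ when $n\equiv3\pmod4$ — and the bound $|N|\le2^n$ follows, since in the generic case $2^{\,n-s_2(n)}\le2^{\,n-1}<2^n$ and in the exceptional case $3\cdot2^{\,(n-3)-s_2(n-3)}\le3\cdot2^{\,n-3}=\tfrac38\,2^n<2^n$.
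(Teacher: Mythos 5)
Your proposal is correct in outline, but there is nothing in the paper to compare it against step by step: the paper states this result as a known theorem, quoted from Vdovin \cite{Vdo} without proof (Vdovin's paper is chiefly about large nilpotent subgroups of finite simple groups, and the $\Sym(n)$ statement is extracted from there). What you have written is therefore a genuinely different route --- a self-contained elementary proof where the paper offers only a citation. Your three-stage structure is sound: the reduction $|N|\leq\prod_i f(n_i)$ via the embedding into transitive constituents is standard and sharp (equality forces $N$ to be the direct product of maximal transitive constituents, which is what makes the classification, not just the bound, come out); the formula $f(m)=\prod_{p^c\,\|\,m}p^{(p^c-1)/(p-1)}$ is the classical structure theorem for transitive nilpotent groups, and your derivation is right --- the complement $O_{p'}(N)$ acts transitively on the block system of $P$-orbits, forcing each $P_i$-orbit to have size exactly the full $p_i$-part of $m$, and then $N_\alpha=\prod_i(P_i)_\alpha$ by order counting yields the $N$-set isomorphism $\Omega\cong\prod_i P_i/(P_i)_\alpha$, which is the clean way to get the Cartesian decomposition you flagged as delicate; and the extremal analysis over partitions correctly identifies the binary expansion as optimal among $2$-power parts (amalgamating equal parts gains a factor $2$), the single $C_3$ as the only profitable odd part (exactly when $n\equiv 3\pmod 4$, matching $n=2(2k+1)+1$, via $s_2(n)\leq s_2(n-3)+2$ with equality precisely then, giving the ratio $3\cdot 2^{s_2(n)-s_2(n-3)-3}\leq 3/2$), and the final bound $2^n$ with room to spare since the true maximum is $2^{n-s_2(n)}$ or $3\cdot 2^{(n-3)-s_2(n-3)}$. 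The one step you should not wave through as ``a short computation'' is $f(m)<2^{m-s_2(m)}$ for $m\geq 4$ not a $2$-power: the crude estimate bounding the odd part by $3^{(q-1)/2}$ combined with a worst-case bound on $s_2(m)$ is too lossy for small cases such as $m=5$, so you need to argue per odd prime power (e.g. $p^{(p^c-1)/(p-1)}\leq 2^{p^c-2}$ for odd prime powers $p^c\geq 5$, using $\log_2 p/(p-1)\leq \log_2 3/2<0.8$, with $p^c=3$ the lone exception) and then recombine using subadditivity of $s_2$; this is true and elementary but does require the case bookkeeping to be done. Net comparison: the paper's citation keeps its exposition short, while your argument buys a complete proof and the exact maximum order, which is strictly sharper than the $2^n$ the paper actually uses.
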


\section{The main theorems}

\begin{proof}[Proof of Theorem A]We may assume that $G = G/N$, so that for every non-trivial characteristic subgroup $K$ of $G$, the image $G/K$ is nilpotent.  Let $M$ be a minimal characteristic subgroup of $G$.

Suppose that $M$ is soluble.  Then $M$ is an elementary abelian $p$-group for some $p$.  It follows that $G$ has a normal and hence characteristic Sylow $p$-subgroup $S$ say, as $G/M$ is nilpotent.  By the Schur-Zassenhaus theorem, $G$ is of the form $S \rtimes H$, where $H$ is a $p'$-group; note that $H \cong G/S$, so $H$ is nilpotent.  Furthermore, $F(G/\Phi(S)) = F(G)/\Phi(S) < G/\Phi(S)$ by Corollary \ref{tatecor}, so $G/\Phi(S)$ is not nilpotent; thus $\Phi(S) = 1$.  So $S$ is elementary abelian, and $\log_p(|S|) = d(S) \leq \dm$.

Note that $C_S(H) = O_{p'}(G)$; moreover, $F(G) = SO_{p'}(G) < G$, so $O_{p'}(G)$ does not contain $H$.  This ensures $G/O_{p'}(G)$ is non-nilpotent, so $O_{p'}(G) = 1$.  Thus $H$ acts faithfully on $S$ by conjugation.  It follows that $H$ is isomorphic to a nilpotent $p'$-subgroup of $\GL(\dm,p)$; hence $|H| \leq p^{(\cst-1)\dm}/2$ by Theorem \ref{nilplin}, so $|G| = |H||S| \leq p^{\cst\dm}/2$.  We have now proved all of the assertions for case (i).

Now suppose that $M$ is insoluble.  Then $M$ is a direct product of a set $\Omega$ of isomorphic copies of a non-abelian finite simple group $Q$.  Let $G$ act on $\Omega$ by conjugation and let $p$ and $q$ be two distinct primes which divide the order of $Q$.  Then the number of orbits of $S_p$ on $\Omega$ is at most $\dm$ by Lemma \ref{comporb}; moreover, the orbits all have the same size, say $p^m$, since $M$ acts trivially on $\Omega$ and $S_pM$ is characteristic in $G$.  Thus $|\Omega| = xp^m$ for some non-negative integers $x$ and $m$ such that $x \leq \dm$; similarly $|\Omega| = x' q^{m'}$ for some $x'$ and $m'$ such that $x' \leq \dm$.  This produces an upper bound for $|\Omega|$ as follows:
\[ |\Omega| = \gcd(|\Omega|,|\Omega|) \leq \gcd(xp^m,x') \gcd(xp^m,q^{m'}) = \gcd(xp^m,x') \gcd(x,q^{m'}) \leq x'x \leq \dm^2.\]

Let $R = \bigcap_i N_G(Q_i)$.  Then $G/R$ is isomorphic to a nilpotent subgroup of $\Sym(\Omega)$, so $|G/R| \leq 2^{\dm^2}$ by Theorem \ref{nilper}.

By Corollary \ref{lambdafsg} we have $|\Aut(Q)| < b^{\pmax^2}$ where $b$ is an absolute constant.  Since $G/C_G(M)$ is insoluble and hence non-nilpotent, we also have $C_G(M) = 1$, so that $|R| \leq |\Aut(Q)|^{\dm^2}$.  Hence $|G| \leq (2b^{\pmax^2})^{\dm^2} < (2b)^{\dm^2\pmax^2}$.    We have now proved all of the assertions for case (ii).\end{proof}

An obstacle to improving the bound in case (ii) is illustrated by the following example.

\begin{ex}(My thanks go to Robert Wilson for suggesting this example.)  Let $H = \SL(2,q)$, where $q = p^{p^e}$, and adjoin the Frobenius automorphism $f: x \mapsto x^p$ of the defining field $\bF_q$ to form the semidirect product $G = H \rtimes \langle f \rangle$.  The order of $f$ is $p^e$, and a Sylow $p$-subgroup of $G$ is generated by $f$ together with the element $s$ of $H$, where
\[ s = \left( \begin{array}{cc}
1 & \mu \\
0 & 1 \end{array} \right)\]
such that $\mu$ is a primitive element of $\bF_q$.  There is also a cyclic subgroup of $H$ of order $(q^2-1)$; since $|G|/(q^2-1)$ is a power of $p$, it follows that all Sylow $r$-subgroups of $G$ are cyclic for $r \not= p$.

Thus $d_p(G) =2$, and $d_r(G)=1$ for any prime $r$ not equal to $p$.  So the Sylow subgroups of $G$ have a bounded number of generators, for any values of $p$ and $e$.  On the other hand, given a non-nilpotent image $G/N$ of $G$, then $N$ is a proper normal subgroup of $\SL(2,q)$, and hence $|N| \leq 2$.  In other words, every non-nilpotent image of $G$ has order at least $p^e q(q^2-1)/2 = p^{p^e + e}(p^{2p^e} - 1)/2$.\end{ex}

The non-nilpotent images described in Theorem A will now be used to prove Theorem B and its application to profinite groups.

\begin{proof}[Proof of Theorem B]Choose a sequence $G_i$ of subgroups as follows: $G_0=G$, and if $G_i$ is not nilpotent, then $G_{i+1}$ is a subgroup of $G_i$ that is maximal subject to the conditions that $G_{i+1}$ is characteristic in $G_i$ and $G_i/G_{i+1}$ is non-nilpotent.  The sequence terminates with a nilpotent characteristic subgroup $G_t$ for some $t \geq 0$.

Let $r_i = \sum_{p \in \bP} \log_p|(G_i \cap S_p)\Phi(S_p)/\Phi(S_p)|$.  Then $r_0 = \ds$, and $r_i \geq r_{i+1}$ for all $i$.  Suppose $r_i = r_{i+1}$ for some $i < t$.  Then $(G_i \cap S_p)\Phi(S_p) = (G_{i+1} \cap S_p)\Phi(S_p)$ for all $p \in \bP$, so $G_i/G_{i+1}$ is nilpotent by Corollary \ref{tatecor}, contradicting the choice of $G_{i+1}$.  Hence $r_0, \dots, r_t$ is a strictly decreasing sequence of non-negative integers, bounded above by $\ds$; thus $t \leq \ds$.

Now $\Phi(G) \geq \Phi(G_t)$ as $G_t$ is normal in $G$, so $|G:\Phi(G)| \leq |G_t:\Phi(G_t)|$.  As $G_t$ is nilpotent, the quotient $G_t/\Phi(G_t)$ is abelian of square-free exponent, so $|G:G_t| \leq \pmax^{\ds(G_t)}$; in turn, $\ds(G_t) \leq |G:G_t|(\ds-1)+ 1$ by the Schreier index formula.  Thus a bound for $|G:\Phi(G)|$ in terms of $\dm$ and $\pmax$ will follow from a similar bound for the index of $G_t$.

Suppose $|G:G_i|$ is bounded by a function of $\dm$ and $\pmax$ for some integer $i$.  Then $|G_i:G_{i+1}|$ is bounded by a function of $\pmax(G_i)$ and $\dm(G_i)$ by Theorem A; as $\pmax(G_i) \leq \pmax$ and $\dm(G_i) \leq |G:G_i|\dm$, it follows that $|G:G_{i+1}|$ is bounded by a function of $\dm$ and $\pmax$.  Hence $|G:G_j|$ is bounded by a function of $\dm$ and $\pmax$ for any fixed $j$, and thus $|G:G_t|$ is bounded by a function of $\dm$ and $\pmax$, giving a bound for $|G:\Phi(G)|$ as required for (i).

Now assume $G$ is non-trivial and soluble.  Suppose $|G:G_i| \leq g_i$ for some integer $i$.  Then $\dm(G_i) \leq \dm g_i - g_i + 1$ by the Schreier index formula.  It follows from Theorem A that the index $|G:G_{i+1}|$ is at most $g_i\pmax^{\cst (\dm g_i - g_i +1)}/2$.  Thus
\[ |G:G_{i+1}| \leq g_i\pmax^{\cst (\dm g_i - g_i +1)}/2 = \pmax^{\cst \dm g_i}g_i/(2\pmax^{\cst (g_i-1)}) \leq \pmax^{\cst \dm g_i} = g_{i+1}.\]
Thus $|G:G_t|$ is at most $g_t$ by induction, which is at most $g_{\ds}$.  Hence $|G:\Phi(G)|$ is at most $g_{\ds}\pmax^{g_{\ds}(\ds-1)+ 1}$, which is at most $g_{\ds}\pmax^{\ds g_{\ds}}$ as required for (ii).\end{proof}

\begin{proof}[Proof of Corollary \ref{melcor}] Let $K$ be an open normal subgroup of $G$.  The hypotheses ensure that $\dm(G/K) \leq d$ and $\pmax(G/K) \leq n$; thus by Theorem B, $|G/K:\Phi(G/K)| \leq f(d,n)$ for some function $f$.  Let $N_K$ be the lift of $\Phi(G/K)$ to $G$, and let $N$ be the intersection of the $N_K$ as $K$ ranges over all open normal subgroups.  Then $N$ is closed in $G$; moreover, $N$ is the inverse limit of finite nilpotent groups, while $G/N$ is the inverse limit of groups of order at most $f(d,n)$, so $G/N$ itself has order at most $f(d,n)$.  Hence $N$ is an open pro-(finite nilpotent) normal subgroup of $G$ of index at most $f(d,n)$.\end{proof}

\section{A bound on the number of non-abelian composition factors}

To obtain a bound on the number of non-abelian composition factors, with weaker hypotheses than required for Theorem B, we will make use of some more known results.  Theorems \ref{simordthm} and \ref{schconj} state some facts about finite simple groups that follow from the classification, while Theorem \ref{senstr} is a result from the theory of arithmetic.

\begin{thm}[strengthening of the Odd Order Theorem (\cite{Fei})]\label{simordthm}Let $G$ be a non-abelian finite simple group.  Then $|G|$ is divisible by at least one of $6$ and $10$.\end{thm}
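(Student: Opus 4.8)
The plan is to deduce the statement from J.\ Thompson's classification of the minimal simple groups, reducing to it by a short section argument. Recall that a \emph{minimal simple group} is a non-abelian finite simple group all of whose proper subgroups are soluble. First I would observe that for any section $H = K/L$ (with $L \unlhd K \leq G$) one has $|H| \mid |G|$, so it is enough to produce a minimal simple section $S$ of $G$ with $6 \mid |S|$ or $10 \mid |S|$. To find such an $S$, choose a non-abelian simple section of $G$ of least possible order---one exists, since $G$ itself qualifies---and call it $S$. If some proper subgroup $U < S$ were insoluble, a composition series of $U$ would supply a non-abelian simple section $T$ of $U$, hence of $S$ and of $G$, with $|T| \leq |U| < |S|$, contradicting the minimality of $|S|$. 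Hence every proper subgroup of $S$ is soluble, i.e.\ $S$ is a minimal simple group, and $|S| \mid |G|$.

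By Thompson's theorem, $S$ is isomorphic to one of $\mathrm{PSL}(2,2^a)$ ($a$ prime), $\mathrm{PSL}(2,3^a)$ ($a$ an odd prime), $\mathrm{PSL}(2,r)$ ($r > 3$ a prime with $r \equiv \pm 2 \pmod 5$), the Suzuki group $\mathrm{Sz}(2^a)$ ($a$ an odd prime), or $\mathrm{PSL}(3,3)$; every group on this list has even order, so $2 \mid |S|$ automatically. It then remains to check $3 \mid |S|$ or $5 \mid |S|$ family by family, by an elementary computation on the order formula. The two groups defined over a field of characteristic $3$, namely $\mathrm{PSL}(2,3^a)$ and $\mathrm{PSL}(3,3)$, obviously have order divisible by $3$. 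For $\mathrm{PSL}(2,2^a)$, of order $2^a(2^a-1)(2^a+1)$, the factor $2^a$ is prime to $3$, so exactly one of $2^a \pm 1$ is a multiple of $3$; and for $\mathrm{PSL}(2,r)$ with $r > 3$ prime, $3 \mid (r-1)(r+1)$ for the same reason, whence $3 \mid r(r^2-1)/2 = |S|$. In all of these cases $3 \mid |S|$ and so $6 \mid |S|$.

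The Suzuki groups are the crucial---and the only genuinely interesting---case, since their order $2^{2a}(2^{2a}+1)(2^a-1)$ is never divisible by $3$. Here I would fall back on the prime $5$: as $a$ is odd, $2^{2a} = 4^a \equiv (-1)^a \equiv -1 \pmod 5$, so $5 \mid 2^{2a}+1$ and hence $5 \mid |S|$, giving $10 \mid |S|$. In every case $|S|$, and therefore $|G|$, is divisible by $6$ or by $10$, as required. The one deep ingredient is Thompson's classification of the minimal simple groups (itself resting on the Odd Order Theorem); the section reduction and the residue computations are elementary, the only subtle point being that the Suzuki family forces the use of $10$ in place of $6$.
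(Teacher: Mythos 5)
Your proposal is correct, and it is worth noting that the paper itself gives no proof of this statement at all: it is stated as one of several ``facts about finite simple groups that follow from the classification,'' with no argument supplied. Your derivation is therefore more informative than what the paper records, and it also rests on a strictly weaker input: Thompson's classification of minimal simple groups (from the $N$-group papers), rather than the full classification of finite simple groups. Each step checks out. The reduction is sound --- the order of a section $K/L$ divides $|G|$ by Lagrange, a section of a section is a section, and your minimal-counterexample argument correctly shows that a non-abelian simple section of least order is a minimal simple group. Your list of minimal simple groups matches Thompson's theorem: $\mathrm{PSL}(2,2^a)$ with $a$ prime, $\mathrm{PSL}(2,3^a)$ with $a$ an odd prime, $\mathrm{PSL}(2,r)$ with $r>3$ prime and $r \equiv \pm 2 \pmod 5$, $\mathrm{Sz}(2^a)$ with $a$ an odd prime, and $\mathrm{PSL}(3,3)$. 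The residue computations are right: $3 \mid 2^{2a}-1$ handles $\mathrm{PSL}(2,2^a)$, $3 \mid r^2-1$ handles $\mathrm{PSL}(2,r)$ (and dividing $r(r^2-1)$ by $2$ cannot remove the factor $3$), the characteristic-$3$ groups are trivial, and for the Suzuki groups, whose order $2^{2a}(2^{2a}+1)(2^a-1)$ is famously prime to $3$, your observation that $4^a \equiv -1 \pmod 5$ for odd $a$ correctly yields $5 \mid 2^{2a}+1$, hence $10 \mid |S|$. Since $|S|$ divides $|G|$, the conclusion transfers to $G$. You have correctly identified the Suzuki family as the reason the statement must allow $10$ as an alternative to $6$; this is exactly why the theorem cannot be improved to ``$6$ divides $|G|$.''
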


\begin{thm}[Schreier conjecture]\label{schconj}Let $G$ be a finite simple group.  Then $\Out(G)$ is soluble.\end{thm}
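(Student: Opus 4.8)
The plan is to invoke the classification of finite simple groups and to verify the assertion one family at a time, since no classification-free proof of the Schreier conjecture is known. First I would dispose of the abelian simple groups: if $G$ is cyclic of prime order $p$ then $\Inn(G)$ is trivial and $\Out(G) = \Aut(G)$ is cyclic of order $p-1$, hence soluble. For the remainder I may assume that $G$ is non-abelian simple, so that by the classification $G$ is an alternating group $\Alt(n)$ with $n \geq 5$, a simple group of Lie type, or one of the $26$ sporadic groups.

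The alternating and sporadic cases are read off from standard tables. For the alternating groups one has $\Out(\Alt(n)) \cong \bZ/2\bZ$ for every $n \geq 5$ other than $n = 6$, while $\Out(\Alt(6))$ is elementary abelian of order $4$; in each case $\Out(G)$ is abelian and so soluble. For each of the sporadic groups $|\Out(G)| \leq 2$, so again $\Out(G)$ is cyclic.

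The groups of Lie type form the substantive case, and here I would appeal to Steinberg's theorem describing their automorphisms. For a simple group $G$ of Lie type over $\bF_q$ with $q$ a power of $p$, every automorphism is a composite of an inner, a diagonal, a field and a graph automorphism. Consequently $\Out(G)$ has an abelian normal subgroup $D$, the image of the diagonal automorphisms, whose quotient $\Out(G)/D$ is generated by the field and graph automorphisms: the field automorphisms contribute a cyclic group, namely a quotient of $\mathrm{Gal}(\bF_q/\bF_p)$, while the graph automorphisms contribute a group of order at most $2$, except in the triality case of type $D_4$, where they yield $\Sym(3)$. In every case $\Out(G)/D$ is soluble; since $D$ is abelian, it follows that $\Out(G)$ is soluble.

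The genuine obstacle is that there is no uniform, structural reason for the conclusion: it rests squarely on the finite list furnished by the classification, together with Steinberg's computation of the automorphism groups of the Lie type families. The only point at which non-abelian behaviour intrudes is the factor $\Sym(3)$ arising for type $D_4$; but $\Sym(3)$ is itself soluble, so solubility is preserved throughout, and no finite simple group gives rise to a non-soluble outer automorphism group.
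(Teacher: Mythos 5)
Your outline is correct, and it cannot diverge from ``the paper's proof'' because the paper gives none: Theorem \ref{schconj} is quoted there as a known fact, with the surrounding text noting only that it follows from the classification of finite simple groups. Your sketch is exactly the standard justification one would cite --- CFSG reduces to alternating, sporadic and Lie type families; the first two have $|\Out(G)| \leq 4$ with $\Out(G)$ abelian (including $\Out(\Alt(6)) \cong \bZ/2\bZ \times \bZ/2\bZ$); and for Lie type, Steinberg's description gives $\Out(G)$ as (abelian diagonal part)-by-(field and graph part), soluble in all cases. Two small points of care, neither of which damages the argument: first, the precise structure of $\Out(G)$ for twisted and small-characteristic groups is slightly messier than your statement suggests (there are extraordinary graph automorphisms for types $B_2$, $G_2$, $F_4$ in characteristics $2$, $3$, $2$, and the extension $d{:}f{:}g$ need not split as cleanly as ``abelian normal subgroup with quotient generated by field and graph maps'' in every family), but every configuration that occurs is abelian-by-cyclic-by-(group of order at most $6$), hence soluble, with $\Sym(3)$ at $D_4$ the unique non-abelian contribution, as you say; second, your handling of the abelian case is a sensible addition, since the paper's statement reads ``finite simple group'' without the non-abelian hypothesis it imposes elsewhere. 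As a proof proposal this is the right shape; a fully rigorous version would simply replace the table-lookups by references (e.g.\ to Steinberg's theorem and the Atlas data), which is in effect what the paper does by citation.
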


\begin{defn}Given $x \in \bN$, let $s_p(x)$ be the sum of the digits of the base-$p$ expansion of $x$.\end{defn}

\begin{thm}[Senge, Straus \cite{Sen}]\label{senstr}Let $p$ and $q$ be distinct primes and let $x \in \bN$.  Then $x$ is bounded by a function of $(p,q,s_p(x),s_q(x))$.\end{thm}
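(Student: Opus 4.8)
The plan is to translate the digit-sum hypothesis into a Diophantine equation and then appeal to the finiteness theory for $S$-unit equations. First I would record the elementary but crucial fact that, in any base $p$, the digit sum is subadditive with $s_p(p^i)=1$, so that $s_p(x)\le k$ holds if and only if $x$ is a sum of at most $k$ powers of $p$ (repetitions allowed), the minimal such representation using exactly $s_p(x)$ terms. Applying this in both bases, the minimal representations give
\[ x=\sum_{i=1}^{m}p^{a_i}=\sum_{j=1}^{n}q^{b_j},\qquad m=s_p(x),\ n=s_q(x), \]
with nonnegative integer exponents. It therefore suffices to prove that the common value of any such equation, for arbitrary fixed $m$ and $n$, is bounded in terms of $p,q,m,n$; feeding in $m=s_p(x)$ and $n=s_q(x)$ then yields the theorem.

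Second, I would recognise the equation $\sum_i p^{a_i}=\sum_j q^{b_j}$ as an $S$-unit equation for the prime set $S=\{p,q\}$: dividing through by $q^{b_n}$ turns it into $u_1+\cdots+u_{m+n-1}=1$, where each $u_\ell$ has the form $\pm p^{s}q^{t}$, i.e.\ is a rational $S$-unit. The key input is the theorem of Evertse and of van der Poorten--Schlickewei (ultimately resting on the $p$-adic Subspace Theorem of Schlickewei and Schmidt) that such a unit equation has only finitely many non-degenerate solutions, that is, solutions in which no proper subsum of the $u_\ell$ vanishes. For a non-degenerate solution the finitely many admissible values of each $S$-unit $p^{a_i}q^{-b_n}$ determine the exponents $a_i$ and $b_n$ uniquely by unique factorisation, and hence bound $x$. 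A degenerate solution, by contrast, yields a vanishing subsum, i.e.\ an identity $\sum_{i\in I}p^{a_i}=\sum_{j\in J}q^{b_j}$ of exactly the same shape but with strictly fewer terms, which I would dispose of by induction on $m+n$. This gives a bound on $x$ depending only on $(p,q,s_p(x),s_q(x))$, as required.

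I expect the genuine obstacle to be that no purely elementary counting argument can succeed. A pigeonhole argument does show that a bounded digit sum forces structure: if $x<p^L$ has at most $k$ nonzero base-$p$ digits, then some block of at least $(L-k)/(k+1)$ consecutive digits vanishes, so $x\bmod p^{M}<p^{M-g}$ for a large modulus $p^M$ and a long gap $g$, and symmetrically for $q$. This exhibits $x$ as simultaneously extremely well approximated in the $p$-adic and $q$-adic metrics by sparse numbers, and one can indeed derive the boundedness of $x$ from Ridout's mixed archimedean/non-archimedean generalisation of Roth's theorem. However, the naive density estimate---that each of the sets $\{x:s_p(x)\le k_1\}$ and $\{x:s_q(x)\le k_2\}$ meets $[1,N]$ in only $O((\log N)^{k})$ points---does not suffice, since two sparse sets can intersect infinitely often, so serious Diophantine input is unavoidable. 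I would also note that both the Subspace Theorem and the Ridout route yield only an ineffective bound, which is all that is needed here, whereas an effective version follows from Baker's theory of linear forms in logarithms (Stewart).
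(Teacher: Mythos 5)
This theorem is imported by the paper as a known black box: it is stated with a citation to Senge--Straus and never proved in the text, so there is no internal proof to compare against. Judged on its own merits, your argument is correct and essentially complete in outline. The reduction is right: the minimal representation of $x$ as a sum of powers of $p$ uses exactly $s_p(x)$ terms, so it suffices to bound the common value of $\sum_{i=1}^{m} p^{a_i} = \sum_{j=1}^{n} q^{b_j}$ for fixed $(p,q,m,n)$. Dividing by $q^{b_n}$ gives an $S$-unit equation with $S=\{p,q\}$; the Evertse / van der Poorten--Schlickewei finiteness theorem handles nondegenerate solutions, and your recovery of the exponents from the unit values by unique factorisation is exactly the point where the hypothesis $p \neq q$ enters (for $p=q$ the unit $p^{a}q^{-b}$ does not determine $(a,b)$, and the theorem is false). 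Your induction on $m+n$ for degenerate solutions also goes through: a vanishing subsum must mix positive and negative terms (a one-signed subsum cannot vanish), so both it and its complement clear to identities of the same shape $\sum_{i\in I}p^{a_i}=\sum_{j\in J}q^{b_j}$ with strictly fewer terms and at least one term on each side, with base case $p^{a}=q^{b}$ forcing $a=b=0$; then $x$ is the sum of the two inductively bounded subvalues. Since the statement only asserts the existence of a bounding function, ineffectivity is harmless.

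It is worth noting that your route is not the one in the cited source: Senge and Straus (1971) predate the $p$-adic Subspace Theorem and argued via Pisot--Vijayaraghavan numbers and sets of multiplicity (hence the title of their paper), with the key role played by the irrationality of $\log p/\log q$; Stewart later gave an effective proof via Baker's theory, as you mention. Your $S$-unit argument is the standard modern proof and is arguably cleaner and more general (it bounds all solutions of the exponential equation, not just digit-sum instances). One caveat: your aside that the gap structure plus Ridout's theorem alone suffices is plausible but not substantiated in your sketch; it is inessential to your main argument, so nothing rests on it.
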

 
\begin{proof}[Proof of Theorem C]Let $\pi = \{p,q\}$ and let $x=|\Comp_\pi(G)|$.  We may assume that every non-trivial normal subgroup of $G$ has a composition factor of order divisible by $pq$.  This ensures that $G$ acts faithfully on $E_\pi(G)$.  Let $S_r$ be a Sylow $r$-subgroup of $G$ where $r$ is $p$ or $q$.  Then by Lemma \ref{comporb}, $S_p$ has at most $d_p(G)$ orbits on $\Comp_\pi(G)$; since each orbit has $p$-power order, this forces $s_p(x) \leq d_p(G)$.  Similarly, $s_q(x) \leq d_q(G)$.  It follows from Theorem \ref{senstr} that $x$ is bounded by a function of $(p,q,d_p(G),d_q(G))$.

Consider the quotient $G/E_\pi(G)$; note that this is isomorphic to a subgroup of $\Out(E_\pi(G))$.  Let $N= \bigcap \{ N_G(Q) \mid Q \in \Comp_\pi(G)\}$.  Then $|G/N| \leq x!$ ; also, $N/E_\pi(G)$ is soluble by Theorem \ref{schconj}.  It follows that in some (and hence any) composition series for $G$, the number of factors of order divisible by $pq$ is bounded by a function of $x$, and hence by a function of $(p,q,d_p(G),d_q(G))$.

The final assertion now follows immediately from Theorem \ref{simordthm}.\end{proof}

\section{Acknowledgments}This paper is partly based on results obtained by the author while under the supervision of Robert Wilson at Queen Mary, University of London (QMUL).  My thanks go to Charles Leedham-Green, L\'{a}szl\'{o} Pyber and Robert Wilson for their comments and advice, and to EPSRC and QMUL for their financial support.


\begin{thebibliography}{4}
\bibitem{BGP}
L. Babai, A. J. Goodman and L. Pyber.  Groups without faithful transitive permutation representations of small degree.  \textit{J. Algebra} \textbf{195} (1997), no. 1, 1--29.
\bibitem{Fei}
W. Feit and J. G. Thompson.  Solvability of groups of odd order. \textit{Pacific J. Math.} \textbf{13} (1963), 775--1029.
\bibitem{Gur}
R. M. Guralnick.  On the number of generators of a finite group. \textit{Arch. Math.} (Basel) \textbf{53} (1989), no. 6, 521--523.
\bibitem{Hup}
B. Huppert.  Subnormale Untergruppen und $p$-Sylowgruppen. \textit{Acta Sci. Math. Szeged} \textbf{22} (1961), 46--61.
\bibitem{Luc}A. Lucchini.  A bound on the number of generators of a finite group.  \textit{Arch. Math.} (Basel)  \textbf{53} (1989), no. 4, 313--317. 
\bibitem{Mel}
O. V. Mel'nikov.  Profinite groups with finitely generated Sylow subgroups. (Russian.)  \textit{Dokl. Akad. Nauk Belarusi} \textbf{40} (1996), no. 6, 34--37, 123.
\bibitem{Qui}
M. Quick.  Probabilistic generation of wreath products of non-abelian finite simple groups.
 \textit{Comm. Algebra} \textbf{32} (2004), no. 12, 4753--4768. 
\bibitem{Rob}
D. J. S. Robinson. \textit{A Course in the Theory of Groups} (Springer-Verlag, New York, 1982).
\bibitem{Sen}
H. G. Senge and E. G. Straus.  PV-numbers and sets of multiplicity.  In \textit{Proceedings of the Washington State University Conference on Number Theory} (Washington State Univ., Pullman, Wash., 1971), pp. 55--67.
\bibitem{Tat}
J. Tate.  Nilpotent quotient groups. \textit{Topology} \textbf{3} (1964) suppl. 1, 109--111.
\bibitem{Wol}
T. R. Wolf. Solvable and nilpotent subgroups of $\GL_n(q^m)$. \textit{Canad. J. Math.}, \textbf{34} (1982), 1097-1111.
\bibitem{Vdo}
E. P. Vdovin.  Large nilpotent subgroups of finite simple groups. (Russian.) \textit{Algebra Log.} \textbf{39} (2000), no. 5, 526--546, 630; translation in \textit{Algebra and Logic} \textbf{39} (2000), no. 5, 301--312.
\end{thebibliography}
\end{document}